%
%
%
\documentclass{amsproc}

\usepackage[all]{xy}
\usepackage{amssymb}
\usepackage{amscd}
\usepackage{amsfonts}



\theoremstyle{plain}
\newtheorem{introThm}{Theorem}

\newtheorem{Thm}{Theorem}[section]
\newtheorem{Lem}[Thm]{Lemma}

\newtheorem{Conj}[Thm]{Conjecture}



\theoremstyle{definition}

\newtheorem{Rem}[Thm]{Remark}


\newcommand{\ad}{{\mathrm{ad}}}

\newcommand{\map}{{\mathrm{map}}}
\newcommand{\Der}{{\mathrm{Der}}}
\newcommand{\Ad}{{\mathrm{Ad}}}
\newcommand{\Pad}{{\mathrm{Pad}}}

\newcommand{\Dirlim}{\varinjlim}
\newcommand{\Invlim}{\varprojlim}

\newcommand{\Q}{{\mathbb{Q}}}
\newcommand{\PPP}{{\mathbb{P}}}

\newcommand{\G}{{\mathcal{G}}}

\newcommand{\aut}{\text{\rm aut}(G) }

\def\:{\colon\!}

\def\aut{\text{\rm aut}}

\begin{document}

\title[Group-like function spaces]
{Localization of  grouplike function and section spaces with compact domain} 

\author{Claude L.~Schochet}
\address{Department of Mathematics,
     Wayne State University,
     Detroit MI 48202}
\email{claude@math.wayne.edu}

\author{Samuel B. Smith}
\address{Department of Mathematics,
     Saint Joseph's University,
     Philadelphia PA 19131}
\email{smith@sju.edu}
\keywords{gauge group,   fibrewise self-equivalences,  fibrewise groups,   fibrewise localization, rational homotopy theory }
\subjclass[2000]{55P60,55P62, 55R70,55R10, 55Q52}
\begin{abstract}
By recent results in  \cite{KSS},   the standard localization theory for function and section spaces due to Hilton-Mislin-Roitberg  and M\"{o}ller  extend outside the CW category to   the case of compact metric domain in the presence of a grouplike structure.    We study  applications in two cases  directly generalizing the gauge group of a principal bundle.      We prove an identity for the monoid  $\aut(\xi)$ of  fibre-homotopy self-equivalences of a Hurewicz fibration $\xi$  --- due to Gottlieb and Booth-Heath-Morgan-Piccinini in the CW category ---  in the  compact case. This leads to  an extended  localization result for  $\aut(\xi)$.   We also obtain an extended localization theory for groups of sections $\Gamma(\zeta)$ of a fibrewise group $\zeta$. 
We give two  applications in rational homotopy theory.  
%
  
  \end{abstract}

\maketitle

 \section{Introduction}
We study two generalizations of the   homotopy classification problem for gauge groups. 
    Let $X$ be a  space, $G$ a connected CW topological group and  $h \colon X \to BG$ a map.  
 The  {\em gauge group}  $\G(P)$   corresponding to this data  is   the topological group of $G$-equivariant self-maps  of $P \colon E \to X$  where $P$ is the principal $G$-bundle induced by $h$.   Fixing $G$ and $X$, the      problem  is that of   determining the  number of distinct homotopy types or, alternately,  the number of  H-homotopy types  corresponding to maps in $[X, BG].$   
 Complete results in special cases are given by  Kono \cite{K}, Crabb-Sutherland \cite{CS} and Kono-Tsukuda \cite{KT} among many others.
 
 After rationalization,  the gauge group classification problem admits a complete solution with considerable generality.
  By \cite[Theorem D]{KSS}, when  $X$ is  a compact metric space and $G$ is a homotopy finite, connected, CW group,    the rationalization of  the connected component     of the identity of $\G(P)$ is H-commutative and independent of the classifying map $h 
 \colon X \to BG$.
 When $X$ is actually a finite CW complex, this result    may be deduced  from   an identity for the gauge group due to Gottlieb, (\ref{eq:Gottlieb}), below, combined with the standard localization theory for function   spaces due to    Hilton-Mislin-Roitberg \cite{HMR} (see \cite[Theorem 5.7]{KSS}).   Alternately,  the result for $X$ finite CW follows from    corresponding localization results for section spaces due to M{\" o}ller  \cite{Mol} and a result of Crabb-Sutherland \cite[Proposition 2.2]{CS}.     
The proof   for $X$ compact metric in \cite{KSS} entails an  extension of    the  localization results of Hilton-Mislin-Roitberg  and M{\" o}ller to the case of non-CW domain.  
  In this paper, we   expand on this analysis to develop  corresponding extended    localization theories for  
two   natural generalizations of the gauge group.

 The two generalizations  we consider are related  to   two  basic identities for the gauge group.   First, let $\map(X, Y)$ denote the space of all continuous maps with the compact-open topology. Let  $\map(X, Y; f)$
denote  the path-component of a given map $f \colon X \to Y.$  Then,  for $X$ a finite CW complex,  there is an $H$-equivalence
 \begin{equation} \label{eq:Gottlieb}  \G(P) \simeq \Omega \map(X, BG; h) \end{equation}
 (see \cite{Got} and  \cite[Proposition 2.4]{AB}).  
 Second,  let $\Ad(P) \colon E \times_G G^{\ad} \to X$ denote  the adjoint bundle.  Here the total space  is the quotient of the product
by the diagonal action where $G^{\ad} = G$ is a left-$G$ space via the adjoint action 
and the projection is  induced by the projection $E \to X.$ This is an  non-principal $G$-bundle. We have an isomorphism \begin{equation} \label{eq:Ad} \G(P) \cong \Gamma(\Ad(P)),
\end{equation}  where the latter space is the group of
 sections with fibrewise multiplication  (see \cite[p.539]{AB}).

 Now suppose given  a Hurewicz fibration $\xi \colon E \to X$. We assume here and throughout that $X$ has a distinguished, nondegenerate basepoint. By the fibre of $\xi$, we will mean the fibre over this basepoint.    We consider the monoid $\aut(\xi)$  consisting  of all fibre-homotopy self-equivalences of $\xi$  covering the identity of $X$ topologized as a subspace of  $\map(E, E)$.  The monoid $\aut(\xi)$ is a natural generalization of the gauge group to the fibre-homotopy setting. Unlike the gauge group,  the rational H-homotopy type of this monoid is generally a nontrivial invariant of the fibre-homotopy  theory of $\xi$  (see \cite{FLS}).

By  \cite[Theorem 3.3]{BHMP}, the identity 
 (\ref{eq:Gottlieb}) extends to 
 a corresponding identity for $\aut(\xi)$ when both  $X$ and the fibre $F$ are   finite CW complexes.  Our first main result extends this identity, in turn, to the 
 case $X$ is compact metric.  
   Recall the universal $F$-fibration, for $F$ finite CW,  may be identified, up to homotopy type, as a  sequence $F \to B\aut_*(F) \to B\aut(F)$  where $\aut(F)$ and $\aut_*(F)$ are the monoids of free and based homotopy self-equivalences of $F$.  The spaces    $B\aut(F)$   and $B\aut_*(F)$   are the Dold-Lashof classifying spaces for these  monoids.   (See    \cite{St,   Maymemoir} for the classification theory  and  \cite{Gottlieb} for the identification with Dold-Lashof \cite{DL}.)    
 
\begin{introThm}  \label{introthm:aut} Let $X$ be a compact metric space, $F$ a finite CW complex and
 $h \colon X \to B\aut(F)$ a map.  Let $\xi \colon E \to X$ be the corresponding
 $F$-fibration.  Then there is an H-equivalence
 $$\aut(\xi) \simeq \Omega \map(X, B\aut(F); h).$$
 \end{introThm}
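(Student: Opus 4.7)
The plan is to adapt the argument of Booth-Heath-Morgan-Piccinini \cite{BHMP} from the finite CW setting to the compact metric setting by re-expressing both sides of the claimed H-equivalence as a common section space, then invoking the extended mapping-space and section-space results of \cite{KSS} where cellular hypotheses on $X$ were originally used.

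First I would construct the fibrewise analog of the adjoint bundle. For any Hurewicz fibration $\xi \colon E \to X$, form the fibrewise monoid $\mathrm{ad}(\xi) \to X$ whose fibre over $x \in X$ is the monoid $\aut(\xi_x)$ of self-equivalences of the fibre over $x$. The construction is natural in $\xi$, and applied to the universal fibration $B\aut_*(F) \to B\aut(F)$ it yields a universal adjoint fibration $\mathrm{ad}(\omega) \to B\aut(F)$, with $\mathrm{ad}(\xi) \cong h^*\mathrm{ad}(\omega)$. Restriction of a fibre-homotopy self-equivalence of $\xi$ to its fibrewise values defines a natural H-map $\aut(\xi) \to \Gamma(\mathrm{ad}(\xi))$; using that the compact metric space $X$ admits partitions of unity, this map is an H-equivalence. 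This is the compact metric analog of the identification underlying \cite{BHMP} in the CW case.

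Next I would identify $\mathrm{ad}(\omega)$ with a fibration having contractible total space. The fibre of $\mathrm{ad}(\omega)$ is $\aut(F) \simeq \Omega B\aut(F)$, and the construction makes $\mathrm{ad}(\omega)$ into the universal principal $\aut(F)$-quasifibration in the sense of Dold-Lashof \cite{DL}, whose total space is therefore contractible. Pulling back along $h$ and taking sections yields
\[
\Gamma(\mathrm{ad}(\xi)) \simeq \mathrm{hofib}_h\bigl(\map(X, E\aut(F)) \to \map(X, B\aut(F))\bigr) \simeq \Omega\,\map(X, B\aut(F); h),
\]
using contractibility of $\map(X, E\aut(F))$. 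In the CW case this is standard; in the compact metric case the key point is that $\map(X, -)$ still preserves fibrations and contractibility on the relevant components, which is the technical content of the extended section-space theory in \cite{KSS}.

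The main obstacle will be carrying the H-space structure through these identifications rather than merely the underlying homotopy type. The product on $\aut(\xi)$ is composition of fibre-homotopy self-equivalences, while the product on $\Omega \map(X, B\aut(F); h)$ is loop concatenation; both are induced fibrewise from the loop product on $\Omega B\aut(F) \simeq \aut(F)$, so compatibility is formal in the CW category. Establishing compatibility in the compact metric setting amounts to checking that continuity properties of the compact-open topology survive the adjunctions and pullbacks in the construction, which is precisely what the machinery of \cite{KSS} provides.
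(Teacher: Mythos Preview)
There is a genuine error in your identification of $\mathrm{ad}(\omega)$. The fibrewise monoid of self-equivalences of the fibres of the universal $F$-fibration is \emph{not} the universal principal $\aut(F)$-quasifibration. It is the adjoint bundle $E\aut(F) \times_{\aut(F)} \aut(F)^{\ad} \to B\aut(F)$, whose total space has the homotopy type of the free loop space $\map(S^1, B\aut(F))$ (compare the identity $EG \times_G G^{\ad} \simeq \map(S^1, BG; 0)$ used elsewhere in the paper), and this is far from contractible. You have conflated two $\aut(F)$-bundles over $B\aut(F)$ with the same fibre: the principal one (translation action, contractible total space $E\aut(F)$) and the adjoint one (conjugation action). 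Consequently your displayed chain of equivalences breaks: $\map(X, E\aut(F))$ must be replaced by $\map(X, \map(S^1, B\aut(F)))$, and contractibility fails.

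The route is salvageable---sections of the pullback of the free loop fibration $\map(S^1, B\aut(F)) \to B\aut(F)$ along $h$ are, by the exponential law for $X$ compact, exactly loops in $\map(X, B\aut(F))$ based at $h$---but that is a different argument from the one you wrote, and you would still owe a careful justification of $\aut(\xi) \simeq \Gamma(\mathrm{ad}(\xi))$ and of the fibre-homotopy equivalence between $\mathrm{ad}(\omega)$ and the free loop fibration; partitions of unity are not the relevant tool, the exponential laws for fibrewise mapping spaces are. For comparison, the paper takes a different path. It uses the Booth--Heath--Morgan--Piccinini construction $\xi \,\Box_1\, \xi_\infty \to X$, whose points over $x$ are equivalences from $\xi_x$ into fibres of the \emph{universal} fibration (not self-equivalences of $\xi_x$). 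There is a fibration $\Phi \colon \Gamma(\xi \,\Box_1\, \xi_\infty) \to \map(X, B\aut(F))$ with fibre $\aut(\xi)$ over $h$, and the paper proves $\Gamma(\xi \,\Box_1\, \xi_\infty)$ is contractible by writing $X = \varprojlim X_j$ with $X_j$ finite CW, applying the direct-limit theorem for section spaces, and invoking the finite-CW case of \cite{BHMP} termwise; the H-equivalence then comes from the multiplicativity of the connecting map in the Barratt--Puppe sequence.
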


As a consequence, we obtain an extended localization theory for $\aut(\xi).$  
 Let $\PPP$ be a collection of primes.    We say a space $Y$ is {\em nilpotent} if $Y$ is connected, has the homotopy type of a  CW complex  
 and has  a nilpotent homotopy system (see \cite[Definition II.2.1]{HMR}).   In this case, $Y$ admits
 a $\PPP$-localization $\ell_Y \colon Y \to Y_\PPP$ \cite[Theorem II.3A]{HMR}. 
 Given a map $f \colon X \to Y$ we write $f_\PPP = \ell
 _Y \circ f \colon X \to Y_\PPP.$
  Given a    monoid $G$ we write $G_\circ$ for the path component of the identity.  Given a space $Z$ with distinguished basepoint  we write
$\Omega_0 Z$ for the space of loops based at the basepoint.  For the function space $\map(X, Y;f)$ we assume $f$ is the basepoint.


\begin{introThm} \label{intro1} Let $X$ be a simply connected compact metric space, $F$ a finite CW complex and
 $h \colon X \to B\aut(F)$ a map.   Let $\xi \colon E \to X$ be the corresponding
 $F$-fibration. Then $\aut(\xi)_\circ$ is a nilpotent space  
and  we have an H-equivalence $$ (\aut(\xi)_\circ)_\PPP \simeq\Omega_\circ \map(X, (B\aut(F)_\circ)_\PPP; (\tilde{h})_\PPP ). $$
Here $B\aut(F)_\circ$  --- the Dold-Lashof classifying space of $\aut(F)_\circ$ --- is the universal cover of $B\aut(F)$ and $\tilde{h} \colon X \to B\aut(F)_\circ$ is the unique lifting of $h.$ 
\end{introThm}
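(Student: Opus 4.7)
\smallskip

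\noindent\textbf{Proof proposal.} The plan is to combine Theorem~\ref{introthm:aut} with a lifting through the universal cover of $B\aut(F)$ and then apply the extended grouplike localization theory from \cite{KSS}.

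\emph{Step 1 (reduce to a function space with nilpotent target).} Theorem~\ref{introthm:aut} gives an H-equivalence $\aut(\xi)\simeq\Omega\map(X,B\aut(F);h)$. Restricting to identity components yields
$$\aut(\xi)_\circ\simeq\Omega_\circ\map(X,B\aut(F);h).$$
Since $X$ is simply connected, $h$ lifts uniquely to $\tilde{h}\colon X\to B\aut(F)_\circ$ through the universal cover; moreover every map in the path component of $h$ lifts as well, and a standard argument using the covering property together with evaluation fibrations shows that the projection $B\aut(F)_\circ\to B\aut(F)$ induces a weak equivalence
$$\map(X,B\aut(F)_\circ;\tilde{h})\;\simeq\;\map(X,B\aut(F);h).$$
Applying $\Omega_\circ$ and composing, $\aut(\xi)_\circ\simeq\Omega_\circ\map(X,B\aut(F)_\circ;\tilde{h})$ as H-spaces.

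\emph{Step 2 (nilpotence).} The right-hand side is a based loop space, hence a connected H-space. Any connected H-space is a simple space (its fundamental group is abelian and acts trivially on every higher homotopy group), so in particular nilpotent. This gives the nilpotence of $\aut(\xi)_\circ$.

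\emph{Step 3 (apply extended localization).} The hypotheses of the extended Hilton--Mislin--Roitberg/M\"oller theorem from \cite{KSS} now hold: $X$ is compact metric, $B\aut(F)_\circ$ is simply connected CW (hence nilpotent), and the loop space $\Omega_\circ\map(X,B\aut(F)_\circ;\tilde{h})$ is grouplike. The theorem then yields the H-equivalence
$$\bigl(\Omega_\circ\map(X,B\aut(F)_\circ;\tilde{h})\bigr)_\PPP\;\simeq\;\Omega_\circ\map\bigl(X,(B\aut(F)_\circ)_\PPP;(\tilde{h})_\PPP\bigr),$$
which combined with Step 1 gives the stated H-equivalence.

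\emph{Main obstacle.} The main difficulty I anticipate lies in Step 3: carefully invoking the compact-metric grouplike extension of \cite{KSS} in this precise form, making sure that the grouplike H-structure on $\aut(\xi)_\circ$ (from composition of fibre-homotopy self-equivalences, which matches the loop-space structure by Theorem~\ref{introthm:aut}) is the structure the theorem requires, and that $(\tilde{h})_\PPP$ is the correct choice of basepoint for the localized function space on the right.
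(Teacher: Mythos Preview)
Your proposal follows essentially the same route as the paper: use Theorem~\ref{introthm:aut}, pass to the universal cover $B\aut(F)_\circ$, and then apply the compact-metric localization result for looped function spaces. Two small points deserve tightening. First, in Step~1 the assertion that $\map(X,B\aut(F)_\circ;\tilde h)\to\map(X,B\aut(F);h)$ is a weak equivalence is slightly too strong: this map is a covering map of components, hence an isomorphism on $\pi_n$ for $n\geq 2$ but only an injection on $\pi_1$ (its image is the kernel of the evaluation $\pi_1(\map(X,B\aut(F);h))\to\pi_1(B\aut(F))$, which need not vanish). The paper is more careful here and only claims the weak H-equivalence after applying $\Omega_\circ$, which is all you use anyway. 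Second, in Step~3 the result you need is not quite \cite[Theorem~7.1]{KSS}, which requires $\map(X,Y;f)$ itself to be nilpotent a~priori; rather it is the looped variant (the paper's Theorem~\ref{thm:maplocalize}), proved by the same direct-limit argument but stated for $\Omega_\circ\map(X,Y;f)$ so that nilpotence is automatic. With those two adjustments your argument matches the paper's.
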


The  second generalization we consider is  based on (\ref{eq:Ad}) in which  the gauge group corresponds to  a group of sections.   Recall $\zeta \colon E \to X$ is a {\em fibrewise group}  if there is a fibrewise map $ m \colon E \times_X   E \to E$ over
$X$, a section   $e \colon X \to E$ and a map  $i \colon E \to E$ over $X$ satisfying: (i) $m$ is associative, (ii) $e$ is a two-sided unit and (iii) $i$ is an inverse with respect to the maps $m$ and $e$.
The space of sections $\Gamma(\zeta)$ is then a topological group with the multiplication of sections induced by $m.$  More generally, relaxing   the group axioms to require identities 
only up to homotopy, $\zeta$ is a {\em fibrewise grouplike space} (\cite[p.62]{CJ}) and $\Gamma(\zeta)$ is a grouplike space. 
Our motivating example is  the adjoint bundle   $\Ad(P)$ of a principal $G$-bundle $P,$ as above.  Observe that, if $P \colon E \to X$ is classified by a map $h \colon X \to  BG,$
then $\Ad(P)$ is the pullback  by $h$ of the universal $G$-adjoint bundle $  EG \times_G G^{\ad} \to BG$ which is, in particular,  a CW fibration.  

Suppose generally that $\zeta \colon E \to X$ is a fibrewise grouplike space  with connected grouplike fibre $G$ and, further, that   $\zeta$  is the pullback of a CW fibration.  
In this case,
we may still identify a fibrewise $\PPP$-localization $\zeta \to \zeta_{(\PPP)}$  of $\zeta$ (see  the remarks preceding  Theorem \ref{thm:Gammalocalize}, below). 
Our  third main  result extends  \cite[Theorem 5.3]{Mol}   from the case the base space is   finite  CW   to the case of compact metric base in this context. 

\begin{introThm} \label{intro2} Let $\zeta \colon E \to X$ be a fibrewise grouplike space with connected, CW grouplike fibre $G$  and base $X$  a  compact metric space. Suppose $\zeta$    is the pullback of a CW fibration.  Then $\Gamma(\zeta)_\circ$ is a nilpotent space and the map $$\Gamma(\zeta)_\circ \to \Gamma(\zeta_{(\PPP)})_\circ$$ induced by a fibrewise $\PPP$-localization $\zeta \to \zeta_{(\PPP)}$
 is a $\PPP$-localization map. 
 \end{introThm}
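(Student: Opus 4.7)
The plan is to reduce Theorem~\ref{intro2} to M\"oller's result \cite[Theorem 5.3]{Mol} over finite CW bases by an inverse-limit approximation of $X$, using the grouplike structure to commute $\PPP$-localization with the inverse limit, exactly as in the extended localization framework of \cite{KSS}.

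First, I would identify $\Gamma(\zeta)$ as a lifting space. Writing $\zeta = h^*\bar{\zeta}$ for a CW fibration $\bar{\zeta}\colon \bar{E}\to B$ and classifying map $h\colon X\to B$, sections of $\zeta$ correspond to lifts of $h$ through $\bar{\zeta}$, so $\Gamma(\zeta)$ sits as the fibre of the restriction $\map(X,\bar{E})\to\map(X,B)$ over $h$. The fibrewise grouplike structure on $\zeta$ passes pointwise to a grouplike structure on $\Gamma(\zeta)$, so $\Gamma(\zeta)_\circ$ is a connected $H$-space. Once CW homotopy type is established---which follows from the compact metric/grouplike framework of \cite{KSS} applied to the section and mapping spaces in play---being a connected $H$-space forces $\Gamma(\zeta)_\circ$ to be nilpotent.

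Next, since $X$ is compact metric and $B$ is CW, I would present $h$ via a tower: write $X=\Invlim X_n$ with $X_n$ finite CW (a Mardesic-style expansion) and factor $h$ through a compatible system $h_n\colon X_n\to B$. Pulling $\bar{\zeta}$ back along each $h_n$ yields CW fibrations $\zeta_n\to X_n$ whose inverse limit is $\zeta$, and likewise $\zeta_{(\PPP)}=\Invlim(\zeta_n)_{(\PPP)}$ after fibrewise $\PPP$-localization. For each $n$, M\"oller's theorem applies to give an $H$-equivalence
\[ \Gamma(\zeta_n)_\circ \xrightarrow{\ \simeq\ } \Gamma((\zeta_n)_{(\PPP)})_\circ, \]
and assembling these maps across the tower produces the comparison map of the statement in the limit.

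The main obstacle---the step where the grouplike hypothesis is essential---is to ensure that this inverse limit of $\PPP$-localizations is itself a $\PPP$-localization of the inverse limit of section spaces. This requires controlling the derived inverse limit terms on homotopy groups of the approximating section spaces and verifying a Mittag--Leffler condition; both can fail in general, but are handled by the grouplike analysis of \cite{KSS} (the same mechanism that yielded \cite[Theorem D]{KSS} in the gauge-group case), and are the reason the assumption that $\zeta$ is pulled back from a CW fibration is imposed. With that machinery in hand, the natural maps $\Invlim\Gamma(\zeta_n)_\circ\to\Gamma(\zeta)_\circ$ and $\Invlim\Gamma((\zeta_n)_{(\PPP)})_\circ\to\Gamma(\zeta_{(\PPP)})_\circ$ are weak equivalences of grouplike spaces and $\PPP$-localization commutes with the limit, giving the conclusion.
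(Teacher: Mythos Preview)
Your proposal has the right broad outline---approximate $X$ by finite complexes, apply M\"oller's theorem at each stage, and pass to a limit---but the variance in the limit step is backwards, and this misidentification leads you to invent obstacles that are not there while obscuring where the grouplike hypothesis is actually used.

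Concretely: when $X=\Invlim_j X_j$ with bonding maps $g_{ij}\colon X_j\to X_i$ for $i\le j$, pulling $\bar\zeta$ back along $h_j$ gives fibrations $\zeta_j$ over $X_j$, and the bonding maps induce maps of section spaces in the \emph{covariant} direction, $\Gamma(\zeta_i)\to\Gamma(\zeta_j)$, together with compatible maps $\Gamma(\zeta_j)\to\Gamma(\zeta)$. Thus the section spaces form a \emph{direct} system, not an inverse one, and the key input from \cite{KSS} (Theorem~\ref{thm:Gammalimit} in the paper) is the isomorphism
\[
\Dirlim_j\,\pi_n\bigl(\Gamma(\zeta_j;s_j)\bigr)\;\cong\;\pi_n\bigl(\Gamma(\zeta;s)\bigr).
\]
Since direct limits of abelian groups are exact and $\PPP$-localization of abelian groups commutes with direct limits, there is no $\lim{}^1$ term and no Mittag--Leffler condition to check. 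The comparison with M\"oller's theorem is then a two-line diagram chase (this is Theorem~\ref{thm:Gammalocalize}). Your sentences about ``derived inverse limit terms'' and ``$\Invlim\Gamma(\zeta_n)_\circ\to\Gamma(\zeta)_\circ$ being a weak equivalence'' are therefore aimed at the wrong target and would not go through as stated.

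Relatedly, you misattribute the role of the grouplike hypothesis. It is \emph{not} used to control any limit; the limit argument (Theorem~\ref{thm:Gammalocalize}) holds for any section space with nilpotent fibre once one knows the component in question is nilpotent. The grouplike structure is used exactly once: together with the fact that $\Gamma(\zeta)_\circ$ has CW homotopy type (from \cite[Proposition~3.2]{KSS}, since $\zeta$ is pulled back from a CW fibration over a compact metric base), it makes $\Gamma(\zeta)_\circ$ a connected CW $H$-space and hence nilpotent. That is the entire content of the paper's proof of Theorem~\ref{intro2}: CW type plus grouplike gives nilpotence, and then Theorem~\ref{thm:Gammalocalize} applies directly.
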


\begin{Rem}  In many circumstances, the fibrewise $\PPP$-localization $\zeta_{(\PPP)}$ of a fibrewise grouplike space $\zeta \colon E \to X$ will itself be  a  fibrewise grouplike space over $X$ and the fibre map $\zeta \to \zeta_{(\PPP)}$ will be equivariant.  In this case, 
the equivalence of Theorem \ref{intro2} is actually an H-equivalence.   For example, this is the case  for the adjoint bundle   and, generally, when $\zeta$ is a CW fibration.  For general compact metric $X$, the fibrewise grouplike structure for  $\zeta_{(\PPP)}$ is not assured  due to the lack of uniqueness of fibrewise $\PPP$-localization
outside the CW category.
\end{Rem}

The paper is organized as follows. We prove  Theorems \ref{introthm:aut}-\ref{intro2} in Section \ref{sec:localize}. In Section  
\ref{sec:applications}, 
we apply our results to obtain two consequences.   
We prove that the rationalization of $\aut(\xi)_\circ$  is H-commutative and independent of the classifying map for   fibrations $\xi$
 with fibre satisfying a famous conjecture in rational homotopy theory.  (Theorem \ref{thm:F}).  This result  extends  \cite[Theorem 4]{FT}.  
 As an application of Theorem \ref{intro2}, we extend   \cite[Theorem F]{KSS} 
 on the classification of projective gauge groups (Theorem \ref{thm:P}). In Section \ref{sec:final}, we deduce based versions of  our main results.   
\newline \,
\newline
\noindent
{\bf Acknowledgements.}  We are grateful to our coauthors in  the papers 
\cite{FLS, KSS, LPSS} for their part in discovering these ideas.  We thank  Peter Booth for  very helpful discussions of his work.

\section{Localization of Function and Section Spaces} \label{sec:localize}
 The  standard results  on $\PPP$-localization of function  spaces are as follows.  
By Milnor \cite{Mil}, the path components of  $\map(X, Y)$ are of CW homotopy type when $X$ is a 
compact metric space and $Y$ is a CW complex.  
 By Hilton-Mislin-Roitberg,   when $X$ is finite CW and $Y$ is a nilpotent space then $\map(X, Y; f)$ is itself a nilpotent space and  $$\map(X, Y; f)_\PPP  \simeq \map(X, Y_\PPP; f_\PPP)$$ \cite[Theorem II.3.11]{HMR}.    Recall we write $f_\PPP = \ell_Y \circ f \colon X \to Y_\PPP$.  
 
 These    results all hold for spaces of basepoint-preserving functions,  as well.  In what immediately follows, we will focus    on the basepoint free case. We discuss the based case in Section \ref{sec:final}. The preceding results  
also hold   with alternate hypotheses: namely,   $X$  may be any CW complex when $Y$ is  a finite Postnikov piece (see \cite{HMRS}).  We will
not consider these alternate hypotheses here as they are not amenable to our  methods.


 
 The results of Hilton-Mislin-Roitberg were extended to  section spaces by  M\"{o}ller \cite{Mol} (see also, Scheerer \cite{Scheerer2}). Let $\xi \colon E \to X$
 be a   fibration of connected CW complexes with $X$   finite.    Let $F$ be the fibre over a basepoint of $X$. If $F$ is a nilpotent space 
then   $\xi$  then admits  a fibrewise $\PPP$-localization which is a fibre map $\xi \to \xi_{(\PPP)}$ 
over $X$ inducing $\PPP$-localization on the fibres \cite{May}.
By \cite[Theorem 5.3]{Mol}, under these hypotheses, the component of  $\Gamma(\xi; s)$ corresponding to a given section $s \colon X \to E$ 
is  a nilpotent space and    
$$\Gamma(\xi; s)_\PPP  \simeq \Gamma(\xi_{(\PPP)}; s_0)$$
where $s_0$  is the section of $\xi_{(\PPP)}$ induced by $s.$

  Our extensions of these localization results  depend on    a classical result of Eilenberg-Steenrod in \cite{ES}. Let  $X$ be a compact metric space. By \cite[Theorem X.10.1]{ES}, there is an inverse system of finite complexes $X_j$ with structure maps $g_{ij} \colon X_j \to X_i$ for $ i \leq j$ and compatible maps $g_j \colon X\to X_j$ such that the induced map  $$g\colon X \to  \Invlim_j X_j$$ is a homeomorphism.   Further, given any map $f \colon X \to Y$ for $Y$ a CW complex, there is an index $m$ and a map $f_m \colon X_m \to Y$ such that $ f$ is homotopic to   $f_m \circ g_m.$    \cite[Theorem X.11.9]{ES}.  
  
We apply this result  to study the function space $\map(X, Y;f)$, as follows. Choose $m$ for $f \colon X \to Y$ as above. Given  $j \geq m$ define $f_j \colon X_j \to Y$ by setting
$f_j = f_m \circ g_{mj}.$  
Restricting to indices $ j \geq m$,  we obtain  a direct
system $\map(X_j ; Y; f_j)$ with structure maps $(g_{ij})^* \colon \map(X_i, Y; f_i) \to \map(X_j, Y; f_j)$    and compatible maps $$(g_j)^* \colon   \map(X_j, Y; f) \to \map(X, Y; f_j)$$ both  induced by precomposition. 
We have:

\begin{Thm} \label{thm:mapdirect} {\em (\cite[Theorem 6.4]{KSS})} Let $X$ be a compact metric space and $Y$ a CW complex.  Then, for all $n \geq 1$,  the maps $(g_j)^*$ above induce an isomorphism 
$$ \Dirlim_j \pi_n(\map(X_j, Y; f_j))  \cong  \pi_n(\map(X, Y;f)).
$$\qed
\end{Thm}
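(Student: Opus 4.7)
The plan is to reduce to the Eilenberg--Steenrod approximation theorem via the exponential law. Since $X$ is locally compact Hausdorff, a based map $S^n \to \map(X, Y; f)$ is adjoint to a map $\phi \colon S^n \times X \to Y$ with $\phi|_{\{*\} \times X} = f$, and a based homotopy corresponds to $H \colon I \times S^n \times X \to Y$ constant equal to $f$ on $I \times \{*\} \times X$. The identical adjunction holds for each $X_j$. Under these identifications, the comparison map sends $[\phi_j]$ to $[\phi_j \circ (1 \times g_j)]$.

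For surjectivity, start with a representative $\phi \colon S^n \times X \to Y$. The product $S^n \times X$ is compact metric and coincides with $\Invlim_j(S^n \times X_j)$ as an Eilenberg--Steenrod inverse system of finite CW complexes, so \cite[Theorem X.11.9]{ES} applied to $\phi$ yields an index $k$ and a map $\tilde\phi \colon S^n \times X_k \to Y$ with $\phi \simeq \tilde\phi \circ (1 \times g_k)$. This factorization is only up to free homotopy, so it need not respect the basepoint condition; to fix this, restrict the homotopy to $I \times \{*\} \times X$, obtaining a homotopy $X \to Y$ from $f$ to $\tilde\phi(*, -) \circ g_k$. Apply \cite[Theorem X.11.9]{ES} a second time --- now to $I \times X = \Invlim_\ell(I \times X_\ell)$ --- to produce a finite-level homotopy witnessing $\tilde\phi(*, -) \circ g_{k\ell} \simeq f_\ell$ at some $\ell \geq k$. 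Since $\{*\} \times X_\ell \hookrightarrow S^n \times X_\ell$ is a cofibration of finite CW complexes, the Homotopy Extension Property lets us alter $\tilde\phi \circ (1 \times g_{k\ell})$ to an honest based representative $\phi_\ell \colon S^n \times X_\ell \to Y$ with $\phi_\ell|_{\{*\} \times X_\ell} = f_\ell$ that still pulls back via $g_\ell$ to something based-homotopic to $\phi$.

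For injectivity, suppose $\phi_i$ at level $i$ and $\phi_j$ at level $j$ pull back to based-homotopic representatives in $\map(X, Y; f)$, and pick a based homotopy $H \colon I \times S^n \times X \to Y$. Viewing $I \times S^n \times X$ again as the Eilenberg--Steenrod inverse limit of finite CW complexes $I \times S^n \times X_k$, apply \cite[Theorem X.11.9]{ES} to $H$ to obtain a finite-level homotopy $\tilde H \colon I \times S^n \times X_k \to Y$ with $H \simeq \tilde H \circ (1 \times 1 \times g_k)$ for some $k \geq \max(i, j)$. Repeating the surjectivity argument on the restrictions at $t = 0$, $t = 1$, and on $I \times \{*\} \times X_k$, together with the Homotopy Extension Property for the cofibration $\partial I \times S^n \times X_k \cup I \times \{*\} \times X_k \hookrightarrow I \times S^n \times X_k$, one modifies $\tilde H$ at some $\ell \geq k$ into a based homotopy between $\phi_i \circ (1 \times g_{i\ell})$ and $\phi_j \circ (1 \times g_{j\ell})$ at level $\ell$, proving $[\phi_i] = [\phi_j]$ in the colimit.

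The principal technical obstacle is converting the free factorizations produced by Eilenberg--Steenrod into ones respecting the fixed boundary conditions $\phi|_{\{*\} \times X} = f$ and $H|_{I \times \{*\} \times X} = f$. This is handled uniformly by iterated application of \cite[Theorem X.11.9]{ES} to successive restrictions, combined with the Homotopy Extension Property at each finite CW level. Beyond these bookkeeping issues the argument is formal, and the role of compactness is confined to providing the Eilenberg--Steenrod inverse system and the exponential law.
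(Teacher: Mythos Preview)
The paper does not give a proof here; the result is quoted from \cite[Theorem~6.4]{KSS} and closed with a bare \textsc{qed}, so there is no in-paper argument to compare against.

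Your approach is correct and is the natural one. One point deserves a word of justification: when you invoke \cite[Theorem~X.11.9]{ES} for the system $\{S^n \times X_j\}$ (and likewise for $\{I \times X_j\}$ and $\{I \times S^n \times X_j\}$), note that these product systems are not literally the nerve systems Eilenberg--Steenrod construct for $S^n \times X$, etc., so the citation does not apply verbatim. The quickest repair is to adjoint in the other variable: a map $\phi \colon S^n \times X \to Y$ corresponds to $\hat\phi \colon X \to \map(S^n, Y)$, and $\map(S^n, Y)$ has CW homotopy type by \cite{Mil}, so after replacing it by an honest CW complex the factorization theorem applies directly to the original system $\{X_j\}$ for $X$. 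Under this adjunction your basepoint condition becomes the requirement that $\hat\phi$ lift $f$ through the evaluation fibration $\map(S^n, Y) \to Y$, and your iterated-correction scheme with the homotopy extension property goes through unchanged. Alternatively, one may appeal to the general shape-theoretic fact that \emph{any} inverse system of finite polyhedra with limit a given compact metric space enjoys the factorization property, not only the particular Eilenberg--Steenrod system; with that in hand your argument stands as written.
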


This result leads to an extension of the Hilton-Mislin-Roitberg
result, mentioned above, provided the space $\map(X, Y; f)$ is known {\em a priori} to be a nilpotent space \cite[Theorem 7.1]{KSS}. Essentially the same proof yields the following:

\begin{Thm} \label{thm:maplocalize} Let $X$ be a compact metric space. Let $Y$ be a nilpotent space with $\PPP$-localization $\ell_Y \colon Y \to Y_\PPP$. Let $f \colon X \to Y$ be a given map.
 Then $\ell_Y$ induces an $H$-equivalence
$$ \left(\Omega_\circ \map(X, Y; f) \right)_\PPP \simeq\Omega_\circ\map(X, Y_\PPP; f_\PPP).$$ 
\end{Thm}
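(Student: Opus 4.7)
The plan is to adapt the strategy of \cite[Theorem 7.1]{KSS}: replace the a priori nilpotency of $\map(X,Y;f)$ with the automatic nilpotency of its loop space. By Milnor \cite{Mil}, each component of $\map(X,Y)$ and $\map(X,Y_\PPP)$ has CW homotopy type, so both $\Omega_\circ\map(X,Y;f)$ and $\Omega_\circ\map(X, Y_\PPP; f_\PPP)$ are connected CW H-spaces under loop concatenation, hence simple, and in particular nilpotent.

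Writing $X\cong\Invlim_j X_j$ as in Eilenberg--Steenrod with $f\simeq f_j\circ g_j$, and setting $(f_j)_\PPP := \ell_Y\circ f_j$ (which represents $f_\PPP$ at stage $j$), two applications of Theorem \ref{thm:mapdirect} give, for $n\geq 1$,
$$
\pi_n(\map(X,Y;f)) \cong \Dirlim_j \pi_n(\map(X_j,Y;f_j)), \quad \pi_n(\map(X, Y_\PPP; f_\PPP)) \cong \Dirlim_j \pi_n(\map(X_j, Y_\PPP; (f_j)_\PPP)),
$$
compatibly with the map induced by post-composition with $\ell_Y$. On each finite stage, Hilton--Mislin--Roitberg asserts that $\map(X_j,Y;f_j)$ is nilpotent and that post-composition with $\ell_Y$ induces a $\PPP$-localization. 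For $n\geq 2$ the homotopy groups are abelian nilpotent, a regime in which $\PPP$-localization commutes with filtered direct limits; reindexing via $\pi_n(\Omega_\circ Z) = \pi_{n+1}(Z)$, the induced map $\Omega_\circ\map(X,Y;f) \to \Omega_\circ\map(X, Y_\PPP; f_\PPP)$ is therefore a $\PPP$-localization on every $\pi_n$, $n\geq 1$. Combined with the nilpotency of the source and the $\PPP$-locality of the target (a filtered colimit of $\PPP$-local groups), this exhibits the induced map as a $\PPP$-localization. The H-equivalence assertion follows since post-composition with $\ell_Y$ preserves loop concatenation and so defines an H-map between the loop spaces.

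The main obstacle I anticipate is the treatment of $\pi_0$ and $\pi_1$ of the mapping spaces, which may be nonabelian or of unbounded nilpotency class across the direct system — a regime where $\PPP$-localization interacts poorly with filtered colimits. This is exactly what the passage to the loop space circumvents: all relevant homotopy groups are shifted into abelian degrees, where the direct limit argument becomes routine.
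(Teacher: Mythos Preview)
Your proposal is correct and follows essentially the same route as the paper's proof: both use Theorem~\ref{thm:mapdirect} to identify the homotopy groups of the mapping spaces as direct limits over the finite approximations, invoke Hilton--Mislin--Roitberg at each finite stage, pass to the loop space, and appeal to Milnor for CW type. Your version is slightly more explicit about why the direct limit of $\PPP$-localizations remains a $\PPP$-localization (by restricting to abelian degrees $n\geq 2$ before shifting via $\pi_n(\Omega_\circ Z)=\pi_{n+1}(Z)$), a point the paper leaves implicit.
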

\begin{proof}
Consider the commutative square
$$
\xymatrix{
\Dirlim_j \pi_n(\map(X_j,Y;f_j))  \ar[r]^>>>>>>>>{\cong} \ar[d]  & \pi_n(\map(X,Y;f))  \ar[d] \\
\Dirlim_j \pi_n(\map(X_j,Y_{\PPP}; (f_j)_\PPP )) \ar[r]^>>>>>>{\cong}  & \pi_n(\map(X,Y_{\PPP};f_\PPP)) \\
}
$$
with vertical maps   induced by $\ell_Y$.  That the  left vertical map is a $\PPP$-localization map follows from  \cite[Theorem II.3.11]{HMR}. Thus the right vertical map is a $\PPP$-localization map,  as well.  Looping, we see that $\ell_Y$ induces a weak H-equivalence $\left(\Omega_\circ \map(X, Y; f) \right)_\PPP \simeq\Omega_\circ\map(X, Y_\PPP; f_\PPP)$
and so an honest H-equivalence since the spaces are CW, again by   \cite{Mil} . 
\end{proof}

Next let $\xi \colon E \to X$ be a Hurewicz fibration over   a compact metric space with a fixed section $s \colon X \to E.$  
Suppose $\xi$ is the pullback of a CW fibration $\xi_0 \colon E_0 \to B$ for some 
map $ h \colon X \to B.$   Then $\Gamma(\xi; s)$ is of CW homotopy type \cite[Proposition 3.2]{KSS} Given an inverse system of finite complexes $X_j$ for $X$ with
compatible maps $g_j \colon X \to X_j$ as above, choose an index $m$ so that $h \colon X \to B$ factors as $h = h_m \circ g_m$ for some map $h_m \colon X_m \to B.$  Given $j \geq m$, write $\xi_j \colon E_j \to X_j$ for the pullback of the fibration $\xi_0$ via the
map $h_j = h_m \circ g_{mj} \colon X_j \to B$.   Restricting again to indices $j \geq m$ gives a direct system
of spaces of sections $\Gamma(\xi_j)$  with structure maps $\gamma_{ij} \colon \Gamma(\xi_i) \to \Gamma(\xi_j)$  and compatible maps $\gamma_j \colon \Gamma(\xi_j) \to \Gamma(\xi)$ induced by   $g_{ij}$ and $g_j$  for $j \geq i \geq m.$ 

  By  \cite[Theorem 6.5]{KSS}, with this set-up the induced map $\gamma \colon \Dirlim_{j} \Gamma(\xi_j) \to \Gamma(\xi)$ gives  a bijection
$$\Dirlim\pi_0(\Gamma(\xi_j)) \cong \pi_0(\Gamma(\xi))$$
of pointed sets. 
Thus we may choose an index $m' \geq m$ and a section $s_{m'}$ of $\xi_{m'}$
such that $s_{m'}$ induces a section homotopic to $s$ via $\gamma_{m'}$. Restricting now to indices $j \geq m'$, let $s_j$ be the section induced on $\xi_j$ by $s_{m'}.$  We obtain a direct system of connected spaces 
$\Gamma(\xi_j; s_j)$ with compatible maps $\gamma_j \colon \Gamma(\xi_j; s_j) \to \Gamma(\xi; s)$
   for $j   \geq m'.$  Quoting the theorem for $n \geq 1$, we have:
\begin{Thm} \label{thm:Gammalimit} {\em (\cite[Theorem 6.5]{KSS})} Let $\xi \colon E \to X$ be a Hurewicz fibration over $X$ a compact metric space with a fixed section $s \colon X \to E.$  
Suppose $\xi$ is the pullback of a CW fibration.  Then, for all $n \geq 1,$
the maps $\gamma_j$ above  induce an isomorphism
$$ \Dirlim_j \pi_n(\Gamma(\xi_j; s_j)) \cong \pi_n(\Gamma(\xi; s)).$$
 \qed
\end{Thm}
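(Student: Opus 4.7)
The plan is to reduce the statement to the direct-limit computation for mapping spaces (Theorem \ref{thm:mapdirect}) by realizing each $\Gamma(\xi_j;s_j)$ and $\Gamma(\xi;s)$ as a component of a fiber in a mapping-space fibration and then running the long exact sequence in parallel.

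Since $\xi$ is the pullback along $h \colon X \to B$ of a CW fibration $\xi_0 \colon E_0 \to B$, sections of $\xi$ correspond naturally to lifts of $h$ through $\xi_0$, that is, to maps $\tilde h \colon X \to E_0$ with $\xi_0 \circ \tilde h = h$. This identifies $\Gamma(\xi)$ with the strict fiber over $h$ of
\[
(\xi_0)_{*} \colon \map(X, E_0) \longrightarrow \map(X, B),
\]
which is a Hurewicz fibration since post-composition with a Hurewicz fibration preserves the lifting property. Identifying $\Gamma(\xi;s)$ with the component of the lift $\tilde h$ corresponding to $s$, I obtain a fibration sequence $\Gamma(\xi;s) \to \map(X,E_0;\tilde h) \to \map(X,B;h)$. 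The same discussion for each $\xi_j = h_j^{*}\xi_0$ gives $\Gamma(\xi_j; s_j) \to \map(X_j, E_0; \tilde h_j) \to \map(X_j, B; h_j)$, and precomposition with $g_j \colon X \to X_j$ assembles these into a ladder over the sequence for $X$.

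From these fibrations I form the direct system of long exact homotopy sequences indexed by $j \geq m'$. Passing to $\Dirlim_j$ is exact on (possibly nonabelian) groups, so the colimit row is still exact. Theorem \ref{thm:mapdirect} applied to the CW targets $E_0$ and $B$ identifies $\Dirlim_j \pi_k(\map(X_j, E_0; \tilde h_j)) \cong \pi_k(\map(X,E_0;\tilde h))$ and $\Dirlim_j \pi_k(\map(X_j, B; h_j)) \cong \pi_k(\map(X,B;h))$ for every $k \geq 1$. Because both neighbours on either side of $\Dirlim_j \pi_n(\Gamma(\xi_j;s_j))$ in the colimit sequence become isomorphic to the corresponding entries in the sequence for $X$, the Five Lemma forces $\Dirlim_j \pi_n(\Gamma(\xi_j;s_j)) \to \pi_n(\Gamma(\xi;s))$ to be an isomorphism for every $n \geq 1$.

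The main technical obstacle is aligning basepoints so the ladder commutes strictly. The choice of $s_{m'}$ with $\pi_0(\gamma_{m'})[s_{m'}] = [s]$ and the induced compatible sections $s_j$ translate under the lift correspondence into lifts $\tilde h_j$ satisfying $\tilde h_j \circ g_j \simeq \tilde h$; replacing $s$ by its strict representative within the same component of $\Gamma(\xi)$ leaves all homotopy groups in question unchanged. Beyond this, the only delicacy is the use of the nonabelian Five Lemma in degree one, which is a standard diagram chase on exact sequences of groups and introduces no new hypothesis.
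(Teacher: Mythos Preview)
The paper does not actually prove this theorem: it is quoted verbatim from \cite[Theorem~6.5]{KSS} and closed with a \qed. So there is no ``paper's own proof'' to compare against here.

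That said, your argument is sound and is in fact the natural way to deduce the section-space statement from the function-space statement (Theorem~\ref{thm:mapdirect}). The key identification $\Gamma(h^{*}\xi_0)\cong (\xi_0)_*^{-1}(h)$ is a homeomorphism because $X$ is compact metric (so the exponential law holds and $(\xi_0)_*\colon\map(X,E_0)\to\map(X,B)$ is a genuine Hurewicz fibration), and the Five Lemma step for $n\geq 1$ only requires Theorem~\ref{thm:mapdirect} in degrees $\geq 1$, which is exactly what is available. Your handling of basepoints is adequate: since $\gamma_{m'}(s_{m'})$ is only section-homotopic to $s$, you correctly note that one may replace $s$ within its component, and the induced map on $\pi_n$ for $n\geq 1$ is unaffected.
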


By the work of May \cite{May}, a   fibration $\xi \colon E \to X$ of   CW complexes
with nilpotent fibre $F$ admits a {\em fibrewise $\PPP$-localization} which is a fibration
$\xi_{(\PPP)} \colon E_0 \to X$ and a map $g \colon E \to E_0$ over $X$ such that 
$g$ induces  $\PPP$-localization  $F \to F_\PPP$ on fibres. This fibrewise $\PPP$-localization of $\xi$ is unique up to fibre-homotopy equivalence by Llerena \cite[Theorem 6.1]{Ll}.

We may directly extend 
this  construction    to  non-CW fibrations  which are nevertheless the pullback of   an appropriate CW fibration. Specifically,  let $\xi \colon E \to X$ be the pullback of a
fibration $\xi_0 \colon E_0 \to B$ of  CW complexes with nilpotent fibre
via a map $h \colon X \to B.$ We take $\xi_{(\PPP)} = h^{-1}((\xi_0)_{(\PPP)}),$ the pullback of the fibrewise $\PPP$-localization of $\xi_0$.  We note that uniqueness is no longer assured.

\begin{Thm} \label{thm:Gammalocalize} Let $\xi \colon E \to X$ be a fibration of  spaces with nilpotent
fibre and compact metric base. Suppose  $\xi$ is a pullback of a CW fibration
and that, for some section $s$ of $\xi,$ the component $\Gamma(\xi; s)$ is a nilpotent space.  Let $\xi \to \xi_{(\PPP)}$ be a fibrewise $\PPP$-localization and $s_0$ the induced section.  Then
$$\Gamma(\xi; s)_\PPP \simeq \Gamma(\xi_{(\PPP)}; s_0).$$
\end{Thm}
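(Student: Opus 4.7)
The plan is to mimic the proof of Theorem~\ref{thm:maplocalize}, with Theorem~\ref{thm:Gammalimit} playing the role of Theorem~\ref{thm:mapdirect} and M\"{o}ller's \cite[Theorem 5.3]{Mol} playing the role of the Hilton--Mislin--Roitberg result \cite[Theorem II.3.11]{HMR}. Since nilpotency of $\Gamma(\xi;s)$ is hypothesized and $\Gamma(\xi;s)$ is of CW homotopy type by \cite[Proposition 3.2]{KSS}, it will be enough to show that the canonical map $\Gamma(\xi;s) \to \Gamma(\xi_{(\PPP)}; s_0)$ induced by the fibrewise $\PPP$-localization is a $\PPP$-localization on each $\pi_n$, since the target is then forced to be the $\PPP$-localization by the universal property.

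First I would fix a CW fibration $\xi_0 \colon E_0 \to B$ of which $\xi$ is a pullback and a factorization $h = h_m \circ g_m$ of its classifying map through some $X_m$, exactly as in the setup preceding Theorem~\ref{thm:Gammalimit}. Since $\xi_0$ has nilpotent fibre, May's construction provides a fibrewise $\PPP$-localization $(\xi_0)_{(\PPP)}$; pulling back along $h$ and along $h_j = h_m \circ g_{mj}$ yields compatible fibrewise $\PPP$-localizations $\xi_{(\PPP)}$ and $(\xi_j)_{(\PPP)}$. After enlarging to an index $m' \geq m$ and selecting a section $s_{m'}$ whose image under $\gamma_{m'}$ is homotopic to $s$, the compatible sections $s_j$ for $j \geq m'$ and their fibrewise $\PPP$-localizations $(s_j)_0$ are determined. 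Applying Theorem~\ref{thm:Gammalimit} twice yields, for every $n \geq 1$, isomorphisms
$$\Dirlim_j \pi_n(\Gamma(\xi_j; s_j)) \cong \pi_n(\Gamma(\xi;s)), \qquad \Dirlim_j \pi_n(\Gamma((\xi_j)_{(\PPP)}; (s_j)_0)) \cong \pi_n(\Gamma(\xi_{(\PPP)}; s_0)).$$
Because each $X_j$ is finite CW with nilpotent CW fibre, M\"{o}ller's theorem identifies the fibrewise localization map $\Gamma(\xi_j; s_j) \to \Gamma((\xi_j)_{(\PPP)}; (s_j)_0)$ with the $\PPP$-localization on the chosen component. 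The two isomorphisms above, together with the naturality of the fibrewise $\PPP$-localization in the map $h$, assemble into a commutative square of homotopy groups whose vertical maps are induced by the localization; the left vertical is then a direct limit of $\PPP$-localization maps of nilpotent groups and so is itself a $\PPP$-localization, forcing the right vertical to be a $\PPP$-localization on each $\pi_n$. Combined with the CW hypothesis on $\Gamma(\xi;s)$ and the resulting Whitehead argument, this gives the stated homotopy equivalence.

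The hard part will be the coherent choice of sections required by Theorem~\ref{thm:Gammalimit}: the $s_j$ must be selected so that they limit to $s$ on $\xi$ and so that, after applying the fibrewise $\PPP$-localization on the tower, the induced $(s_j)_0$ limit to $s_0$ on $\xi_{(\PPP)}$. This uses the $\pi_0$ bijection from \cite[Theorem 6.5]{KSS} and the functoriality of the fibrewise localization along pullback. A second, minor subtlety is the assertion that a direct limit of $\PPP$-localization maps of nilpotent groups is again a $\PPP$-localization; this is immediate for abelian groups and follows for nilpotent $\pi_1$ by filtering through the lower central series, as in the proof of Theorem~\ref{thm:maplocalize}.
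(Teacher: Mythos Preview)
Your proposal is correct and follows essentially the same route as the paper: form the commutative square of homotopy groups with horizontal isomorphisms coming from Theorem~\ref{thm:Gammalimit} and left vertical a $\PPP$-localization by M{\"o}ller, then conclude the right vertical is a $\PPP$-localization. The paper's proof is a terse paragraph that simply displays this square and invokes Theorem~\ref{thm:Gammalimit} and \cite[Theorem 5.3]{Mol}; your write-up is more careful about the coherent choice of sections and the direct-limit-of-localizations step, but the strategy is identical.
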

\begin{proof} Choose an inverse system $X_j$ of finite complexes for $X$ and let $\xi_j$ be the corresponding   fibrations over $X_j$ with fibre $F$ and compatible sections $s_j$.  Consider the commutative diagram, as in the proof of Theorem \ref{thm:maplocalize}:
$$\xymatrix{
\Dirlim_j \pi_n(\Gamma(\xi_j;s_j))  \ar[r]^>>>>>>>>{\cong} \ar[d]  & \pi_n(\Gamma(\xi; s))  \ar[d] \\
\Dirlim_j  \pi_n(\Gamma((\xi_j)_{(\PPP)};(s_j)_0)) \ar[r]^>>>>>>{\cong}  &  \pi_n(\Gamma(\xi_{(\PPP)}; s_0))\\ 
}$$
The horizontal maps are isomorphisms by Theorem \ref{thm:Gammalimit}. The left vertical map is  $\PPP$-localization by     \cite[Theorem 5.3]{Mol}. Thus the right 
vertical map is a $\PPP$-localization, as well.
\end{proof}

 Theorem \ref{thm:Gammalocalize} implies:  

\begin{proof}[Proof of Theorem \ref{intro2}]
Recall we are assuming $\zeta \colon E \to X$ is  fibrewise grouplike  over a compact metric space $X$ with fibre $G$ a connected CW grouplike space. Further, we assume $\zeta$ is the pullback of 
CW fibration.   By \cite[Proposition 3.2]{KSS}, $\Gamma(\zeta)_\circ$ is of CW type and so a
nilpotent space since grouplike.  The result is thus a direct consequence of Theorem \ref{thm:Gammalocalize}. 
\end{proof}

We now turn to the proof of Theorem \ref{intro1}. Fix a compact metric space $X$ and a finite  CW complex $F.$  We write $\xi_\infty \colon  E_\infty \to B\aut(F)$ for the universal $F$-fibration. (Here $E_\infty \simeq B\aut_*(F).$)    Given a map $h \colon X \to B\aut(F),$ let $\xi \colon E \to X$ be the pullback. In  \cite{BHMP}, the authors  define a fibration $$ \xi \, \Box_1 \xi_\infty \colon E \, \Box E_\infty \to X$$ in which the  total space is the set of all homotopy equivalences $f_{a,b} \colon E_a \to (E_\infty)_b$ between  fibres of $\xi$ and fibres of $\xi_\infty$ suitably topologized. 
The projection is given by $f_{a,b} \mapsto  a \in X$ and so the fibre over a basepoint of $X$
 may   be identified with $\aut(F).$  Next, the authors define a fibration
 $$ \Phi \colon \Gamma(\xi \, \Box_1 \xi_\infty) \to \map(X, B\aut(F))$$
 with fibre $\Phi^{-1}(h) \approx \aut(\xi).$ We observe that the construction of $\Phi$  
 and the identification of the  fibre  both hold  in our case.  For the former, the key is \cite[Proposition 2.1]{BHMP} whose proof depends on the use of 
 exponential laws for functional fibrations which hold  for $X$ compact by \cite{BHP}.
 The identification of the fibre over $h$ with $\aut(\xi)$ requires that $\xi$ be an $\mathcal{F}$-fibration in the sense of \cite[Definition 2.1]{Maymemoir} where
 $\mathcal{F}$ is the category of ``fibres'' homotopic to $F.$  This fact is a 
  consequence of our assumption that $\xi$ is the pullback of a CW fibration (use \cite[Lemma 3.3 and Proposition 3.4]{Maymemoir}).   
 
 \begin{Lem} \label{lem:contract} The space $\Gamma(\xi \, \Box_1 \xi_\infty)$ is contractible.   
\end{Lem}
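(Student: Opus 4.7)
The plan is to reduce the lemma to the finite CW case already contained in \cite[Theorem 3.3]{BHMP} and then pass to the inverse limit via Theorem \ref{thm:Gammalimit}. The principal obstacle is verifying a naturality property of the $\Box_1$-construction; once this is in hand, everything else is formal.

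First I would verify that the $\,\Box_1\,$ construction is natural under pullback in its first variable: since $\xi = h^*\xi_\infty$, the fibration $\xi \, \Box_1 \xi_\infty \to X$ is the pullback under $h \colon X \to B\aut(F)$ of $\xi_\infty \, \Box_1 \xi_\infty \to B\aut(F)$. This is essentially bookkeeping from the explicit construction in \cite{BHMP}, relying on the exponential laws for functional fibrations from \cite{BHP} that the authors have already invoked in the paragraph preceding the lemma. Since $F$ is finite CW, both $B\aut(F)$ and $B\aut_*(F)$ have CW homotopy type, and $\xi_\infty \, \Box_1 \xi_\infty$ is accordingly a CW fibration. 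Hence $\xi \, \Box_1 \xi_\infty$ is the pullback of a CW fibration, so by \cite[Proposition 3.2]{KSS} the space $\Gamma(\xi \, \Box_1 \xi_\infty)$ is of CW homotopy type, and it suffices to show its homotopy groups all vanish. The identification $\xi = h^*\xi_\infty$ provides a canonical section $s$ to serve as basepoint.

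Next I would apply the Eilenberg-Steenrod decomposition: write $X = \Invlim_j X_j$ with structure maps $g_j \colon X \to X_j$ and factor $h = h_m \circ g_m$ through some $h_m \colon X_m \to B\aut(F)$. For $j \geq m$ set $\xi_j = h_j^* \xi_\infty$ with $h_j = h_m \circ g_{mj}$. By the naturality established above, $\xi_j \, \Box_1 \xi_\infty$ is the pullback of $\xi_\infty \, \Box_1 \xi_\infty$ through $h_j$, and the pullback squares $\xi = g_j^* \xi_j$ yield compatible section maps $\gamma_j \colon \Gamma(\xi_j \, \Box_1 \xi_\infty; s_j) \to \Gamma(\xi \, \Box_1 \xi_\infty; s)$.

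Finally, for each finite CW base $X_j$ the space $\Gamma(\xi_j \, \Box_1 \xi_\infty)$ is contractible; this is precisely the ingredient used in \cite[Theorem 3.3]{BHMP} to deduce $\aut(\xi_j) \simeq \Omega\map(X_j, B\aut(F); h_j)$ from the fibration $\Phi$. Applying Theorem \ref{thm:Gammalimit} for $n \geq 1$ together with the $\pi_0$-statement from \cite[Theorem 6.5]{KSS} then gives $\pi_n(\Gamma(\xi \, \Box_1 \xi_\infty; s)) = 0$ for all $n \geq 0$. Combined with CW homotopy type, this yields contractibility.
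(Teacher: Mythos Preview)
Your proposal is correct and follows essentially the same route as the paper: identify $\xi\,\Box_1\,\xi_\infty$ as the pullback of the CW fibration $\xi_\infty\,\Box_1\,\xi_\infty$, conclude CW type for the section space via \cite[Proposition 3.2]{KSS}, and then pass through the inverse system using Theorem~\ref{thm:Gammalimit} together with contractibility over each finite $X_j$. The only minor sharpening is that the finite-CW contractibility you invoke is stated as \cite[Proposition~3.1]{BHMP} rather than being merely an ingredient in \cite[Theorem~3.3]{BHMP}, and the paper cites Sch{\"o}n \cite{Schon} explicitly for the CW-fibration claim.
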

\begin{proof}
Observe that $ \xi \, \Box_1 \xi_\infty$   
is the pullback  of the fibration $$ \xi_\infty \, \Box_1 \xi_\infty \colon E_\infty \, \Box E_\infty \to B\aut(F)$$ by $h \colon X \to B\aut(F)$.  The fibration $\xi_\infty \, \Box_1 \xi_\infty$  is a CW fibration by Sch{\"o}n \cite{Schon} since both the base $B\aut(F)$ and the fibre $\aut(F)$ are   CW  since $F$ is finite CW. Thus $\Gamma(\xi \, \Box_1 \xi_\infty)$ is of CW type  \cite[Proposition 3.2]{KSS}. 
It  thus suffices to show $\Gamma(\xi \, \Box_1 \xi_\infty)$
is weakly contractible.  

For this, we apply Theorem \ref{thm:Gammalimit}.  Following the procedure described before this result, write  $X = \Invlim_j X_j$ with each $X_j$  a finite complex.   For $j \geq m$ as above,     let $\xi_j \colon E_j \to X_j$    be the corresponding fibration over $X_j$.  Let  
$\xi_j  \, \Box_1 \xi_\infty \colon E_j \, \Box  E_\infty \to X_j$ the associated construction.
Functoraility of this construction gives fibre maps      $\xi_j \, \Box_1 \xi_\infty \to \xi_k \, \Box_1 \xi_\infty $ for $j \geq k \geq m$ and compatible fibre maps $\xi  \, \Box_1 \xi_\infty \to \xi_j \, \Box_1 \xi_\infty$. We thus obtain a direct system $\Gamma(\xi_j \, \Box_1 \xi_\infty )$ of sections with compatible maps $\gamma_j \colon \Gamma(\xi_j \, \Box_1 \xi_\infty ) \to\Gamma(\xi  \, \Box_1 \xi_\infty )$ satisfying the conditions of Theorem \ref{thm:Gammalimit}.     By \cite[Proposition 3.1]{BHMP}, each space
$\Gamma(\xi_j \, \Box_1 \xi_\infty)$ is contractible.  Thus 
$$ \pi_n(\Gamma(\xi \, \Box_1 \xi_\infty))  \cong  \Dirlim_j \pi_n( \Gamma(\xi_j \, \Box_1 \xi_\infty )) = 0,$$ for $n \geq 0$, as needed. \end{proof}

 We obtain, as a consequence:
 
 \begin{proof}[Proof of Theorem \ref{introthm:aut}]  
  Let $X$ be a compact metric space, $F$ a finite CW complex and
 $h \colon B \to B\aut(F)$ a map.  Let $\xi \colon E \to X$ be the corresponding
 $F$-fibration.  By Lemma \ref{lem:contract}, the connecting map
 $$\delta \colon \Omega \map(X,  B\aut(F); h)) \to \Phi^{-1}(h) \approx \aut(\xi)$$
 in the  Barratt-Puppe sequence for $\Phi \colon \Gamma(\xi \,  \Box_1 \, \xi_\infty) \to \map(X, B\aut(X))$ is a homotopy equivalence. By \cite[Theorem 3.3]{BHMP}, $\delta$  is a multiplicative map and so gives the desired H-equivalence. 
 \end{proof}

Using Theorem \ref{introthm:aut} we obtain:

\begin{proof}[Proof of Theorem \ref{intro1}]
 Recall we are assuming $X$ is a simply connected, compact metric space
 and $ \xi \colon E \to X$ is the pullback of the universal $F$-fibration via $h \colon X \to B\aut(X).$  Since $X$ is simply connected,   $h$ lifts to a map $\tilde{h} \colon X \to B\aut(X)_\circ$ to the universal cover. Further, we have  an $H$-equivalence $$\Omega_\circ \map(X, B\aut(F); h) \simeq\Omega_\circ\map(X, B\aut(F)_\circ; \tilde{h})$$
 as these spaces are evidently weakly H-equivalent and both are  CW complexes \cite{Mil}. 
 The result now follows from Theorem \ref{thm:maplocalize} and Theorem \ref{introthm:aut}
 \end{proof}
 
\section{ Consequences in Rational Homotopy Theory}
\label{sec:applications}

There are good algebraic models for the rational homotopy theory of the monoid $\aut(\xi)$  and the space of sections $\Gamma(\zeta).$  
(See \cite{FLS} for the former and \cite{Hae, BFM} for the latter.)
However, these models require   finiteness and/or nilpotence conditions on the fibration 
and so are not directly applicable to the case of compact domain. In our main results above,    we   require that $\xi$ and $\zeta$ occur as the  pullback of a CW fibration. In this section, we observe  that, 
in certain special  circumstances,  the structure of the rationalization of these CW fibrations
allow for a description of the rational homotopy theory of  these grouplike spaces of maps. 

For the monoid $\aut(\xi)$,  the CW fibration in question is the universal $F$-fibration.  The following result is the $\PPP$-local version of \cite[Proposition 6.1]{BHMP}   in our setting. 

\begin{Thm} \label{thm:app1} Let $X$ be a  simply connected, compact  metric space and $F$ a finite complex.  Suppose the $\PPP$-localization of $B\aut(F)_\circ$ is a grouplike space.  Then, for all   fibrations
$\xi$  corresponding to maps in $[X, B\aut(F)]$, we have an H-equivalence
$$ (\aut(\xi)_\circ)_\PPP \simeq \map(X, (\aut(F)_\circ)_\PPP; 0)$$
where the latter is the space of null maps       into the grouplike space
$(\aut(F)_\circ)_\PPP$ which is a grouplike space  with pointwise multiplication.
 \end{Thm}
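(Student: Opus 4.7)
The plan is to reduce via Theorem~\ref{intro1} and then to exploit the grouplike hypothesis on $Y := (B\aut(F)_\circ)_\PPP$. Since $X$ is simply connected, Theorem~\ref{intro1} supplies the H-equivalence
$$(\aut(\xi)_\circ)_\PPP \;\simeq\; \Omega_\circ \map(X, Y; \tilde h_\PPP),$$
where $\tilde h\colon X\to B\aut(F)_\circ$ is the lift of the classifying map. It therefore suffices to identify the right-hand side with $\map(X, (\aut(F)_\circ)_\PPP; 0)$ as H-spaces, and this identification should be independent of $\tilde h_\PPP$.

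Because $Y$ is grouplike, the mapping space $\map(X, Y)$ inherits a grouplike structure under pointwise multiplication, and translation by the pointwise inverse of $\tilde h_\PPP$ furnishes an H-equivalence of components $\map(X, Y; \tilde h_\PPP) \simeq \map(X, Y; 0)$; passing to loop spaces at the respective basepoints preserves this equivalence. Since the basepoint of $\map(X, Y; 0)$ is the constant map at the basepoint of $Y$, the usual exponential adjunction then yields
$$\Omega_\circ \map(X, Y; 0) \;\simeq\; \map(X, \Omega Y; 0).$$
Finally, $B\aut(F)_\circ$ is a nilpotent CW space, so $\PPP$-localization commutes with $\Omega$, giving $\Omega Y \simeq (\aut(F)_\circ)_\PPP$. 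Chaining these equivalences produces the stated identification.

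The main technical point will be ensuring that the various H-structures line up consistently across the chain: the loop product supplied by Theorem~\ref{intro1} must be identified with the pointwise product claimed in the statement. This reduces to a standard Eckmann--Hilton comparison, since on $\Omega_\circ \map(X, Y; 0) \simeq \map(X, \Omega Y; 0)$ the loop product and the pointwise product (induced either by the grouplike structure on $Y$ or by the loop structure on $\Omega Y$) are mutually compatible H-structures, and so agree up to H-homotopy. Thus the difficulty lies not in any single equivalence but in carefully tracking these multiplicative structures throughout.
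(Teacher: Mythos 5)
Your argument is correct and follows essentially the same route as the paper's proof: apply Theorem \ref{intro1}, use the grouplike structure (homotopy inverse) on $(B\aut(F)_\circ)_\PPP$ to translate from the component of $\tilde h_\PPP$ to the null component, loop, pull $\Omega_\circ$ inside via the exponential law, and identify $\Omega_\circ (B\aut(F)_\circ)_\PPP$ with $(\aut(F)_\circ)_\PPP$. Your added Eckmann--Hilton remark about matching the loop and pointwise H-structures is a reasonable elaboration of a point the paper leaves implicit.
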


\begin{proof} The proof is the same as that given in \cite[Example 4.7]{KSS}. We give the details for completeness. Using the homotopy inverse for  $(B\aut(F)_\circ)_\PPP,$  we obtain a homotopy  equivalence   between   
$\map(X, (B\aut(F)_\circ)_\PPP; \tilde{h}_\PPP)$ and  $\map(X, (B\aut(F)_\circ)_\PPP; 0)$. Looping this equivalence and applying Theorem \ref{intro1} yields   H-equivalences:
$$(\aut(\xi)_\circ)_\PPP \simeq \Omega_\circ \map(X, (B\aut(F)_\circ)_\PPP; \tilde{h}_\PPP) \simeq
\Omega_\circ \map(X, (B\aut(F)_\circ)_\PPP; 0)$$
At the null component, we may pull $\Omega_\circ$  inside to get
$$ \Omega_\circ \map(X, (B\aut(F)_\circ)_\PPP; 0) \simeq 
\map(X, \Omega_\circ (B\aut(F)_\circ)_\PPP; 0) \simeq \map(X, (\aut(F)_\circ)_\PPP; 0),$$
as needed.
\end{proof}

The  condition on $(B\aut(F)_\circ)_\PPP$ is, of course, very strong and will be   rarely satisfied. However, as we explain now,  
there is a famous class of spaces  in rational homotopy theory, identified by Halperin in \cite{Hal}, that    conjecturally (and in many known cases)
do have this property after rationalization. In the rational case,  observe that it is sufficient to check  $(B\aut(F)_\circ)_\Q$ is an    H-space to apply Theorem \ref{thm:app1} since, rationally, all H-spaces are grouplike (see, e.g., \cite{Sch}). 

We say a space $F$ is an {\em $F_0$-space} if $F$ is a simply connected elliptic space (i.e., a finite complex with $\pi_*(F) \otimes \Q$  finite-dimensional) and satisfying
$H^{\mathrm{odd}}(F; \Q) =0$.      Halperin's conjecture (see \cite{Hal}) translated to our setting is:

\begin{Conj} {\em (Halperin}) \label{conj}  Let $F$ be an $F_0$-space. Then the monoid $\aut(F)_\circ$ has vanishing even degree rational homotopy groups. 
\end{Conj}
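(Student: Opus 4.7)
The plan is to translate the conjecture into an algebraic question about derivations of Sullivan minimal models and then attack that algebra --- acknowledging up front that the resulting algebraic step is precisely Halperin's famous open conjecture. Since $F$ is an $F_0$-space, its minimal Sullivan model has the \emph{pure} form $(\Lambda V, d)$ with $V = V^{\mathrm{even}} \oplus V^{\mathrm{odd}}$ both finite-dimensional of equal dimension, $d|_{V^{\mathrm{even}}} = 0$, and $d(V^{\mathrm{odd}}) \subseteq \Lambda V^{\mathrm{even}}$; moreover, if $y_1,\dots,y_n$ is a basis of $V^{\mathrm{odd}}$, then $d(y_1),\dots,d(y_n)$ is a regular sequence in the polynomial algebra $\Lambda V^{\mathrm{even}}$.

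By the work of Sullivan and Tanr\'e, the positive rational homotopy of $\aut(F)_\circ$ is computed as $\pi_n(\aut(F)_\circ) \otimes \Q \cong H_n(\Der(\Lambda V, d))$ for $n \geq 1$, where $\Der(\Lambda V, d)$ denotes the complex of positive-degree derivations of the minimal model with differential $[d,-]$. The conjecture thus reduces to the purely algebraic claim that every cycle of positive even degree in $\Der(\Lambda V, d)$ is a boundary. The plan is to induct on $n = \dim V^{\mathrm{odd}}$, exploiting the regular-sequence structure: the case $n=1$ is a direct computation, and in the inductive step one would filter by the sub-DGA generated by the first $n-1$ odd generators, apply Koszul-style arguments in the spirit of Shiga-Tezuka and Thomas (who established the equal-rank homogeneous case), and attempt to lift boundaries in the quotient back to the full algebra using regularity of the $d(y_i)$.

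The hard part --- and indeed the main obstacle --- is that this is precisely the content of Halperin's long-standing open conjecture. Many partial results are known: equal-rank homogeneous spaces $G/H$, complete intersections, spaces with few generators, hard-Lefschetz $F_0$-spaces, and various other families. Yet no uniform argument has been found. A full proof would almost certainly require a genuinely new algebraic ingredient beyond the regular-sequence property of pure models --- perhaps a Koszul-duality reinterpretation of $\Der(\Lambda V, d)$, a finer structure theorem for pure Sullivan algebras, or an unexpected topological input that directly rules out non-trivial even-degree derivation classes --- and for this reason I would expect the induction sketched above to succeed only under additional hypotheses on the model rather than in full generality.
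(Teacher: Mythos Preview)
You correctly recognize the essential point: the paper does not prove this statement at all. It is stated as Conjecture~\ref{conj} --- Halperin's open conjecture --- and is used only as a \emph{hypothesis} in Theorem~\ref{thm:F} (``Let $F$ be an $F_0$-space satisfying Conjecture~\ref{conj}\ldots''). The paper merely records some known cases (products of even spheres and complex projective spaces by direct computation, equal-rank homogeneous spaces $G/H$ by Shiga--Tezuka) and the Sullivan identity $\pi_k(\aut(F)_\circ)\otimes\Q \cong H_k(\Der(\Lambda V;d))$, exactly as you do.

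Your algebraic translation via derivations of the pure minimal model is the standard and correct reformulation, and your honest assessment that the inductive/Koszul scheme is expected to work only under additional hypotheses is accurate. There is no gap to flag in the sense of a mistaken step: the ``proof'' is, by your own account, a sketch of an approach to an open problem together with a frank statement that the decisive algebraic input is missing. Since the paper offers no proof either, there is nothing to compare beyond noting that your reduction to $H_{\mathrm{even}}(\Der(\Lambda V;d))=0$ matches the formulation the paper invokes.
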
 

Examples of $F_0$-spaces include products of even-dimensional spheres and complex 
projective spaces for which Conjecture \ref{conj}  is easily confirmed.  Homogeneous 
spaces $G/H$ of equal rank, compact pairs $H \subseteq G$ are also $F_0$-spaces and satisfy Conjecture \ref{conj}  by  \cite{ST}. 

We note that, if $F$ is an $F_0$-space satisfying   Conjecture \ref{conj}, then $(B\aut(F)_\circ)_\Q$ is a rational H-space since the Sullivan model of a nilpotent space with no odd rational homotopy has trivial differential.   Further, the (odd) rational homotopy groups of $ \aut(F)_\circ$ can be, in practice,  directly computed  from the minimal model $(\land V; d)$ of $F$ using Sullivan's identity 
$$\pi_{k}(\aut(F)_\circ) \otimes \Q \cong H_{k}(\Der(\land V; d)).$$
(See, e.g.,    \cite{Griv}.) 
Here the latter space is  the homology of the DG vector space of negative degree derivations of $(\land V; d)$.  

The following result extends \cite[Theorem 4]{FT} and \cite[Example 2.7]{FLS}. We write $\Check{H}^*(X; \Q)$ for rational \v{C}ech cohomology.

\begin{Thm} \label{thm:F} Let $X$ be a simply connected, compact metric space and $F$ an $F_0$-space satisfying Conjecture \ref{conj}. Let $h \colon X \to B\aut(F)$ be a map and $\xi$ the corresponding fibration over $X$. Then:
\begin{enumerate}
\item[(1)]  The rational H-homotopy type   of the monoid $\aut(\xi)_\circ$ is  independent of the classifying map $h$. \newline 
\item[(2)]   The monoid  $\aut(\xi)_\circ$ is rationally H-commutative and equivalent to a product of Eilenberg-Mac~Lane spaces with homotopy groups given by 
$$\pi_n(\aut(\xi)_\circ) \otimes \Q \cong \bigoplus_{k \geq n} \Check{H}^{k-n}(X; \Q) \otimes (\pi_{k}(\aut(F)_\circ) \otimes \Q)$$
for $n \geq 1.$
\end{enumerate}
\end{Thm}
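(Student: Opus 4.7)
The plan is to apply Theorem \ref{thm:app1} in the rational case.  By Conjecture \ref{conj}, the monoid $\aut(F)_\circ$ has only odd-degree nonzero rational homotopy groups, so $B\aut(F)_\circ$ has only even-degree rational homotopy.  As remarked just before Conjecture \ref{conj}, this forces the Sullivan minimal model of $(B\aut(F)_\circ)_\Q$ to have trivial differential, so $(B\aut(F)_\circ)_\Q$ is a rational H-space and hence grouplike.  Theorem \ref{thm:app1} then furnishes a rational H-equivalence
$$ (\aut(\xi)_\circ)_\Q \simeq \map(X, (\aut(F)_\circ)_\Q; 0), $$
whose right-hand side depends only on $X$ and $F$.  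This proves (1).

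For (2), the same vanishing together with the fact that $\aut(F)_\circ$ is an H-space of finite rational type (the latter because $F$ is elliptic, so Sullivan's identity $\pi_k(\aut(F)_\circ)\otimes\Q \cong H_k(\Der(\land V,d))$ with $(\land V,d)$ the minimal model of $F$ yields finite dimensionality) implies that the minimal model of $\aut(F)_\circ$ has trivial differential.  Hence there is a rational H-equivalence
$$ (\aut(F)_\circ)_\Q \simeq \prod_{k}K\bigl(\pi_k(\aut(F)_\circ)\otimes\Q,\,k\bigr), $$
a finite product of odd-dimensional rational Eilenberg-Mac~Lane spaces, each H-commutative.  Since mapping spaces inherit their H-structure pointwise and distribute over finite products, we conclude
$$ (\aut(\xi)_\circ)_\Q \simeq \prod_{k}\map\bigl(X,\,K(\pi_k(\aut(F)_\circ)\otimes\Q,\,k);\,0\bigr), $$
which is H-commutative as a product of H-commutative factors.

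It remains to identify the homotopy groups of each factor.  Let $X = \Invlim_j X_j$ be an inverse system of finite complexes as in Section \ref{sec:localize}.  By Theorem \ref{thm:mapdirect}, for a finite-dimensional $\Q$-vector space $V$ and integers $k \geq n \geq 1$,
$$ \pi_n\bigl(\map(X, K(V, k); 0)\bigr) \cong \Dirlim_j \pi_n\bigl(\map(X_j, K(V, k); 0)\bigr) \cong \Dirlim_j H^{k-n}(X_j; V), $$
using the standard computation of homotopy groups of mapping spaces into Eilenberg-Mac~Lane spaces over a finite CW domain.  By continuity of \v{C}ech cohomology on inverse limits of compact metric spaces, this direct limit is $\Check{H}^{k-n}(X;\Q)\otimes V$.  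Setting $V = \pi_k(\aut(F)_\circ)\otimes\Q$ and summing over $k$ yields the stated formula; since each factor is then a rational H-space with these prescribed rational homotopy groups, it is itself a product of rational Eilenberg-Mac~Lane spaces, and hence so is $(\aut(\xi)_\circ)_\Q$.

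The main obstacle I anticipate is verifying the continuity identification $\Dirlim_j H^{k-n}(X_j;\Q) \cong \Check{H}^{k-n}(X;\Q)$: while this is classical Eilenberg-Steenrod continuity, one must confirm that the specific inverse system of finite complexes produced by \cite[Theorem X.10.1]{ES} (which underlies Theorem \ref{thm:mapdirect}) is cofinal among the systems of nerves of open covers computing \v{C}ech cohomology of $X$.  Once this is in hand, the remaining assembly of the pieces above is essentially formal.
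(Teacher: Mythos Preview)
Your proposal is correct and follows essentially the same route as the paper. Part (1) is identical: both invoke the observation before Conjecture \ref{conj} that $(B\aut(F)_\circ)_\Q$ is an H-space (hence grouplike) and apply Theorem \ref{thm:app1}.

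For part (2) there is a mild stylistic difference. The paper argues H-commutativity of $(\aut(F)_\circ)_\Q$ directly from the vanishing of rational Samelson products (odd $\times$ odd lands in even, which is zero) via Scheerer \cite{Sch}, and then transfers H-commutativity to the mapping space by quoting \cite[Theorem 4.10]{LPSS}. You instead exhibit $(\aut(F)_\circ)_\Q$ explicitly as a finite product of odd-degree Eilenberg--Mac~Lane spaces and push the product decomposition through the mapping space. Your route is a bit more self-contained and yields the Eilenberg--Mac~Lane decomposition of $(\aut(\xi)_\circ)_\Q$ for free; the paper's route avoids checking that the splitting is an H-equivalence (which it is, but requires a word). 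For the homotopy group computation both arguments are the same: reduce to maps into Eilenberg--Mac~Lane spaces, use Thom's identity on the finite approximations $X_j$, and pass to the limit via continuity of \v{C}ech cohomology. Your flagged concern about cofinality is not an obstacle: the inverse system of \cite[Theorem X.10.1]{ES} is built from nerves of finite open covers, which is precisely the system computing \v{C}ech cohomology; the paper handles this by reference to \cite[Theorem 5.6]{LPSS}.
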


\begin{proof}
The first statement follows directly from the preceding  paragraph     and 
Theorem \ref{thm:app1}. As for (2), we note $\pi_*(\aut(F)_\circ) \otimes \Q$ is oddly graded and so, for degree reasons,  does not admit any Samelson products. It follows
that $\aut(F)_\circ$ is rationally H-commutative    
\cite{Sch}. Thus $\aut(\xi)_\circ \simeq_\Q \map(X, \aut(F)_\circ; 0)$ is rationally H-commutative, as well  by   \cite[Theorem 4.10]{LPSS}.

The computation of the rational homotopy groups of  $\map(X, \aut(F); 0)$ reduces to the problem of computing homotopy groups of the space of maps into an 
Eilenberg-Mac~Lane space. Here we have the identity  of Thom \cite{Thom}:   $$\pi_q(\map(X, K(\pi, n)) \cong H^{n-q}(X; \pi)$$ which holds, with ordinary cohomology when $X$ is CW.  For
compact spaces,  the same identity holds with rational \v{C}ech cohomology 
by writing $X = \Invlim X_j$  and using the continuity of \v{C}ech cohomology
(see the proof of  \cite[Theorem 5.6]{LPSS}.)  
\end{proof}

We prove a related result for grouplike spaces of sections. We say a fibration $\xi \colon E \to X$
of connected CW complexes is {\em nilpotent} if the spaces $E$ and $X$ are nilpotent. 
In this case, $F$ is also nilpotent by \cite[Theorem II.3.12]{HMR} and  term-by-term $\PPP$-localization
gives a fibration $\xi_\PPP \colon E_\PPP \to X_\PPP$ with fibre $F_\PPP$ \cite[Proposition II.2.13]{HMR}. 
Note that the term-by-term $\PPP$-localization  
$\xi_\PPP \colon E_\PPP \to X_\PPP$ is not the same as the fibrewise $\PPP$-localization   $\xi_{(\PPP)} \colon E_0 \to X$. In fact,  we have:
\begin{Lem} \label{lem:dold}
Let $\xi \colon E \to X$ be a nilpotent fibration with term-by-term $\PPP$-localization $\xi_\PPP$ and fibrewise $\PPP$-localization $\xi_{(\PPP)}$.   Then there is a fibre-homotopy equivalence
 $$\xi_{(\PPP)}  \simeq \ell_X^{-1}(\xi_\PPP)$$ where $\ell_X \colon X \to X_\PPP$  is a $\PPP$-localization map. 
\end{Lem}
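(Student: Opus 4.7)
The plan is to use the uniqueness of fibrewise $\PPP$-localization for CW fibrations, quoted in the excerpt from Llerena, and show that $\ell_X^{-1}(\xi_\PPP)$ is itself a fibrewise $\PPP$-localization of $\xi$. Once that is established, the lemma follows by uniqueness.

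First I would assemble the localization square. Term-by-term $\PPP$-localization of the fibration $\xi \colon E \to X$ furnishes a commutative diagram
$$
\xymatrix{
E \ar[r]^{\ell_E} \ar[d]_{\xi} & E_\PPP \ar[d]^{\xi_\PPP} \\
X \ar[r]_{\ell_X} & X_\PPP
}
$$
in which $\ell_E$ and $\ell_X$ are $\PPP$-localizations. The universal property of pullback then yields a canonical map $\tilde{\ell} \colon E \to \ell_X^{-1}(E_\PPP)$ over $X$ whose composition with the projection to $E_\PPP$ is $\ell_E$. Since $\xi_\PPP$ is a fibration (the term-by-term localization of a nilpotent fibration is a fibration by \cite[Proposition II.2.13]{HMR}), the pullback $\ell_X^{-1}(\xi_\PPP)$ is again a fibration over $X$.

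Next I would identify the map on fibres. The fibre of $\ell_X^{-1}(\xi_\PPP)$ over the basepoint of $X$ equals the fibre of $\xi_\PPP$ over the basepoint of $X_\PPP$, which by the same \cite[Proposition II.2.13]{HMR} is $F_\PPP$. The map induced by $\tilde{\ell}$ on fibres over the basepoint is precisely the map on fibres induced by $\ell_E$, and for a nilpotent fibration this latter map is the $\PPP$-localization $F \to F_\PPP$. Hence $\tilde{\ell} \colon \xi \to \ell_X^{-1}(\xi_\PPP)$ is a fibre map over $X$ restricting to $\PPP$-localization on fibres, i.e.\ a fibrewise $\PPP$-localization in the sense of May.

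Finally, both $\xi_{(\PPP)}$ and $\ell_X^{-1}(\xi_\PPP)$ are fibrewise $\PPP$-localizations of the CW fibration $\xi$ (note we are in the CW fibration setting since $\xi$ is a nilpotent fibration of CW complexes), so Llerena's uniqueness theorem \cite[Theorem 6.1]{Ll} gives the asserted fibre-homotopy equivalence $\xi_{(\PPP)} \simeq \ell_X^{-1}(\xi_\PPP)$. The main point to be careful about is the identification of the fibre of $\tilde{\ell}$ with the fibrewise localization map; this ultimately rests on the standard fact that for a nilpotent fibration the fibre of the term-by-term $\PPP$-localization of the total space is the $\PPP$-localization of the fibre, so the proof is short once this input is invoked correctly.
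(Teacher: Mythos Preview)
Your proposal is correct and follows essentially the same strategy as the paper's proof: both arguments hinge on Llerena's uniqueness theorem for fibrewise $\PPP$-localization. The only minor difference is in packaging: you first verify that $\ell_X^{-1}(\xi_\PPP)$ is itself a fibrewise $\PPP$-localization of $\xi$ and then invoke uniqueness to obtain the fibre-homotopy equivalence directly, whereas the paper quotes Llerena to produce a comparison map $\xi_{(\PPP)} \to \ell_X^{-1}(\xi_\PPP)$ over $X$ and then checks it is a fibre-homotopy equivalence via the $5$-lemma and Dold's theorem \cite[Theorem 3.3]{Dold}. Your route is marginally cleaner since the $5$-lemma/Dold step is absorbed into the uniqueness statement, but the underlying content is the same.
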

\begin{proof}
Using  the uniqueness theorem for fibrewise localization \cite[Theorem 6.1]{Ll} we obtain a fibrewise map 
$\xi_{(\PPP)} \to (\ell_X)^{-1}(\xi_\PPP)$ over $X$. By the 5-lemma and \cite[Theorem 3.3]{Dold},  this is  a fibre homotopy equivalence. 
\end{proof}

As a direct consequence, we have: 

\begin{Thm} \label{thm:app2}  Let $\zeta \colon E \to X$ be a fibrewise grouplike space over a compact metric space $X$ with fibre $G$ connected CW. Suppose $\zeta$ is the pullback of 
nilpotent fibration $\xi$ with $\PPP$-localization  $(\xi)_\PPP$ fibre-homotopically trivial.
Then  
$$ 
(\Gamma(\zeta)_\circ)_\PPP \simeq \map(X, G_\PPP; 0).$$
 
\end{Thm}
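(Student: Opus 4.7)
The plan is to combine Theorem \ref{intro2} with a fibre-homotopy trivialization of the $\PPP$-localized fibration. First I would apply Theorem \ref{intro2} directly: its hypotheses hold since $\zeta$ is fibrewise grouplike with connected CW fibre $G$ over a compact metric base, and $\zeta$ is given as the pullback of the CW fibration $\xi$. This produces an equivalence
$$(\Gamma(\zeta)_\circ)_\PPP \simeq \Gamma(\zeta_{(\PPP)})_\circ,$$
so it suffices to identify the right-hand side with $\map(X, G_\PPP; 0)$.

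Next I would trivialize $\zeta_{(\PPP)}$. Let $h \colon X \to B$ denote the classifying map, so that $\zeta = h^{-1}(\xi)$ with $B$ the base of the nilpotent CW fibration $\xi$. By the construction of the fibrewise $\PPP$-localization of a pullback recalled in the discussion before Theorem \ref{thm:Gammalocalize}, one has $\zeta_{(\PPP)} = h^{-1}(\xi_{(\PPP)})$. By Lemma \ref{lem:dold} there is a fibre-homotopy equivalence $\xi_{(\PPP)} \simeq \ell_B^{-1}(\xi_\PPP)$, and $\xi_\PPP$ itself is fibre-homotopically trivial by hypothesis. Pulling the product trivialization back first along $\ell_B$ and then along $h$ yields a fibre-homotopy equivalence $\zeta_{(\PPP)} \simeq X \times G_\PPP$, whence $\Gamma(\zeta_{(\PPP)}) \simeq \map(X, G_\PPP)$. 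Under this identification the component of the induced unit section corresponds to the null component, giving the claimed equivalence.

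The main obstacle will be the basepoint bookkeeping: I must verify that the fibrewise $\PPP$-localization of the unit section of $\zeta$, transported through the trivialization, lies in the null component of $\map(X, G_\PPP)$. This reduces to arranging the trivialization of $\xi_\PPP$ so that it carries the image of a preferred null section to the constant identity section of $B_\PPP \times G_\PPP$, which is achievable because $G_\PPP$ is path-connected and the trivialization may be adjusted by a fibre-homotopy. Given this compatibility, chaining the equivalences above delivers the theorem.
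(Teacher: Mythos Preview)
Your argument is correct and follows essentially the same route as the paper: both apply Theorem~\ref{intro2}, take $\zeta_{(\PPP)} = h^{-1}(\xi_{(\PPP)})$, invoke Lemma~\ref{lem:dold} to recognize $\xi_{(\PPP)}$ as a pullback of the fibre-homotopically trivial $\xi_\PPP$, and conclude that $\zeta_{(\PPP)}$ is trivial so that its section space is $\map(X, G_\PPP)$. Your treatment is in fact slightly more careful than the paper's, which does not explicitly address the component bookkeeping you flag at the end.
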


\begin{proof} 
By  the remarks preceding the  proof  of Theorem \ref{intro2}, the  fibrewise localization of $\zeta$ may be taken as the pullback of the fibrewise localization $\xi_{(\PPP)}$ of $\xi$.
Since the latter is the pullback of $\xi_{\PPP}$ which is, by hypothesis,  fibre-homotopically trivial,
we see $\zeta_{(\PPP)}$ is fibre-homotopically trivial, as well.  Now use Theorem \ref{intro2}. 
\end{proof}

We apply this last result to a natural generalization of the gauge group. Given a   group $G$ let $PG = G/Z(G)$ denote the projectification.  Given a map $h \colon X \to BPG$ write  $P_h \colon E \to X$ for the corresponding principal $PG$-bundle and $$\Pad(P_h) \colon E \times_{PG} G^{\ad}  \to X$$
the associated $G$-bundle where $PG$ acts on $G^{\ad} = G$ by the adjoint action.
This is a fibrewise group.  We call the group  of sections $\Gamma(\Pad(P_h))$ the {\em projective gauge group}.   
When $G = U(n)$, the projective gauge group   corresponds to the group of unitaries 
of a complex matrix bundle (see \cite[Example 3.7]{KSS}).  

The classification of rational H-types of projective gauge groups for $G$ a compact, connected Lie group and $X$ a compact metric space is given
by \cite[Theorem F]{KSS}.  The following result extends this to all connected Lie groups but at the expense of the H-structure.

\begin{Thm} \label{thm:P} Let $X$ be a compact metric space and  $G$ a connected Lie   group.  Then:
\begin{enumerate} 
\item[(1)] The rational homotopy type of $\Gamma(\Pad(P_h))_\circ$ is independent of   $h \in [X, BPG].$     \newline
\item[(2)]  The rational homotopy groups of $\Gamma(\Pad(P_h))_\circ$ are given by
$$ \pi_n(\Gamma(\Pad(P_h))_\circ) \otimes \Q  
 \cong\,\,  \bigoplus_{k \geq n} \Check{H}^{k-n}(X; \Q)
\, \otimes \, \left( \pi_k(G) \otimes \Q \right),$$
for $ n \geq 1.$   
\end{enumerate}

\end{Thm}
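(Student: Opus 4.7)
The plan is to exhibit $\Pad(P_h)$ as the pullback of a rationally fibre-trivial CW fibration and then invoke Theorem \ref{thm:app2}. Write $\xi_0 \colon EPG \times_{PG} G^{\ad} \to BPG$ for the universal projective adjoint bundle. Since $PG$ and $G$ are CW groups, $\xi_0$ is a CW fibration, and by naturality $\Pad(P_h) = h^{-1}(\xi_0)$. Further, $BPG$ is simply connected (since $PG$ is connected) and the fibre $G$ is an H-space, so $\xi_0$ is a nilpotent fibration in the sense preceding Lemma \ref{lem:dold}.

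The crux of the argument is the verification that $(\xi_0)_\Q$ is fibre-homotopically trivial. When $G$ is a compact connected Lie group this is the key input to \cite[Theorem F]{KSS}; it rests on the identification of the adjoint bundle with the free loop space fibration and the classical rational equivalence $LBPG \simeq_\Q BPG \times PG$, combined with the fact that the adjoint action of $PG$ on $G$ factors through $\mathrm{Inn}(G) = PG$ and so $\xi_0$ is built from the $PG$-adjoint bundle of $PG$ itself by a $Z(G)$-extension. For a general connected Lie group $G$, let $K \subset G$ be a maximal compact subgroup; then $K \hookrightarrow G$ is a homotopy equivalence, and by comparing centres one arranges a rational equivalence $BPK \simeq BPG$ identifying $\xi_0$ rationally with the analogous bundle for $K$. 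This reduces the claim to the compact case.

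With $(\xi_0)_\Q$ fibre-trivial, Theorem \ref{thm:app2} yields a homotopy equivalence
$$(\Gamma(\Pad(P_h))_\circ)_\Q \simeq \map(X, G_\Q; 0),$$
the right-hand side being manifestly independent of $h$; this proves (1). For (2), note that a connected Lie group $G$ has $\pi_*(G) \otimes \Q$ concentrated in odd degrees, so $G_\Q \simeq \prod_k K(\pi_k(G) \otimes \Q, k)$. Applying Thom's identity $\pi_q(\map(X, K(\pi, n); 0)) \cong \Check{H}^{n-q}(X; \pi)$, extended to compact metric $X$ via the continuity of \v{C}ech cohomology precisely as in the proof of Theorem \ref{thm:F}(2), yields the stated decomposition of $\pi_n(\Gamma(\Pad(P_h))_\circ) \otimes \Q$.

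The principal obstacle is establishing rational fibre-triviality of $(\xi_0)_\Q$; this is classical for $G$ compact Lie, but for a general connected Lie group $G$ it requires the homotopy-theoretic reduction to the compact case, and in particular a careful treatment of the relationship between $Z(G)$ and $Z(K)$.
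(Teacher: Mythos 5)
Your overall scheme---pull back the universal projective adjoint bundle $\Pad(\eta_{PG})\colon EPG\times_{PG}G^{\ad}\to BPG$, prove its rationalization is fibre-homotopically trivial, then apply Theorem \ref{thm:app2} and Thom's identity with \v{C}ech cohomology---is the paper's, and those last two steps are fine (your nilpotence argument for the total space, via simply connected base and simple fibre, is also acceptable, where the paper instead cites \cite[Lemma 5.11]{KSS}). The gap is exactly where you flag it, and the repair you propose does not work. Projectification is not compatible with passing to a maximal compact subgroup: $Z(K)$ need not lie in $Z(G)$ (already for $G=SL_2(\mathbb{R})$, $K=SO(2)$), and there is in general no rational equivalence $BPK\simeq BPG$. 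Concretely, for $G=\widetilde{SL_2(\mathbb{R})}$ the maximal compact subgroup $K$ is trivial, so $PK$ is trivial, while $Z(G)\cong\Z$ and $PG\cong PSL_2(\mathbb{R})$ has the rational homotopy type of $S^1$; thus $BPK\not\simeq_{\Q}BPG$ and the claimed identification of $\Pad(\eta_{PG})$ with the bundle for $K$ fails. Your sketch of the compact case is also off target: the free loop fibration $LBPG\to BPG$ has fibre $PG$ (it is the $PG$-adjoint bundle of $PG$), not the fibre $G$ of $\Pad(\eta_{PG})$, and the ``$Z(G)$-extension'' relating the two is precisely what would need an argument.

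The paper treats all connected Lie groups $G$ at once, with no compactness reduction. It first shows the total space $EPG\times_{PG}G^{\ad}$ is a rational H-space: the natural map from $EG\times_{G}G^{\ad}\simeq \map(S^1,BG;0)$ is surjective on rational homotopy groups (\cite[Lemma 5.9]{KSS}, whose proof uses only that $Z(G)$ is rationally aspherical, true for any connected Lie group since $Z(G)$ is abelian Lie), and $\map(S^1,(BG)_\Q;0)$ is an H-space because $\pi_*(G)\otimes\Q$ is oddly graded; then \cite[Lemma 5.8]{LPSS} transfers the rational H-structure along the surjection. Consequently $\Pad(\eta_{PG})$ is a sectioned, nilpotent fibration of rational H-spaces with simply connected base, and Lemma \ref{lem:K}---a minimal-model argument showing such a fibration is rationally fibre-homotopically trivial---finishes the triviality claim, after which Theorem \ref{thm:app2} and the Thom/\v{C}ech computation proceed as you wrote. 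To salvage your proof, replace the maximal-compact reduction by this H-space and minimal-model argument.
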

\begin{proof} 
We note that $\Pad(P_h)$ is the pullback by $h$ of the universal projective adjoint bundle
$\Pad(\eta_{PG}) \colon  EPG \times_{PG} G^{\ad} \to BPG$.  Here $\eta_{PG} \colon EPG \to BPG$ is the universal  principal $PG$-bundle.  By  \cite[Lemma 5.11]{KSS}, the total space $EPG \times_{PG} G^{\ad}$ is a nilpotent space.  (This result assumes $G$ is compact Lie.   However, all that is used there is that $Z(G)$ have vanishing higher rational homotopy groups, which is true for $G$ connected Lie.)  
Thus $\Pad(\eta_{PG})$ is nilpotent since $BPG$ is simply connected.  
Below we show the rationalization of $\Pad(\eta_{PG)})$ is fibre-homotopically trivial.
The result (1) then follows from Theorem \ref{thm:app2}. The result (2) is proved using   the arguments given
in the proof of Theorem \ref{thm:F} (2). 

To show $\Pad(\eta_{PG})_\Q \colon (EPG \times_{PG} G^{\ad})_\Q  \to (BPG)_\Q$
is fibre homotopically trivial, we first show the total space is an H-space. 
Let $EG \times_{G} G^{\ad}$ denote the total space of the universal $G$-adjoint bundle. 
Then $EG \times_{G} G^{\ad}$ is also a nilpotent space. In fact, we have 
$$ EG \times_{G} G^{\ad} \simeq \map(S^1, BG;0)$$
the free loop space of the classifying space (see, e.g., \cite[Lemma 9.1]{KSS}).
Since $BG$ is simply connected,  $\map(S^1, BG; 0)$ is nilpotent   by \cite[Theorem II.3.11]{HMR}.  Further, by this last result again,  
   $$ (EG \times_{G} G^{\ad})_\Q \simeq \map(S^1, (BG)_\Q;0).$$
Now since $G$ is a connected Lie group, it has oddly graded rational homotopy groups. It follows, as above, that    $(BG)_\Q$ is an  H-space.  Thus $\map(X, (BG)_\Q; 0)$ is an H-space, as well.    Next,  by \cite[Lemma 5.9]{KSS}, the natural map $$\pi \colon EG \times_{G} G^{\ad}
 \to  EPG \times_{PG} G^{\ad}  
$$ induces a surjection on rational homotopy groups. (Again, the lemma is stated for  $G$ compact Lie but the proof uses only that $Z(G)$ is rationally aspherical.) Thus, by \cite[Lemma 5.8]{LPSS}, 
since $ EG \times_{G} G^{\ad}$ is a rational H-space so is $EPG \times_{PG} G^{\ad}.$ We have shown that  $\Pad(\eta_{PG})$ is a sectioned, nilpotent fibration of rational H-spaces   with simply connected base.  By Lemma \ref{lem:K} below,   $(\Pad(\eta_{PG}))_\Q$ 
is fibre-homotopically trivial, as needed.
 \end{proof} 

\begin{Lem} \label{lem:K}
Let $\xi \:  E \to B $ 
be a fibration   of   nilpotent
 spaces with $E, B$ and the fibre $F$ each  a rational $H$-space of finite type. Suppose
 $B$ is
 simply connected and the linking
 homomorphism $ \partial \: \pi_k(B) \longrightarrow
 \pi_{k-1}(E)$
  is trivial after rationalization for all $k \geq 2$.  Then
 $\xi_\Q$ is   fibre homotopically  trivial.
 \end{Lem}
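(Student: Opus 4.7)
The plan is to translate the problem into Sullivan's theory of minimal models, where the three hypotheses on $B$, $F$, $E$ become transparent algebraic conditions on the differential of the relative minimal Sullivan model of $\xi$.

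First I will form the relative minimal Sullivan model of $\xi_\Q$.  Because $B$ is simply connected and $F$, $E$, $B$ are nilpotent of finite rational type, this model takes the form
\[
(\Lambda V_B, 0) \hookrightarrow (\Lambda V_B \otimes \Lambda V_F, D) \twoheadrightarrow (\Lambda V_F, 0)
\]
with $D(V_B) = 0$ and $D(V_F) \subseteq \Lambda^+ V_B \otimes \Lambda V_F$.  The two outer differentials vanish because $B$ and $F$ are rational $H$-spaces of finite type, hence equivalent after rationalization to products of $K(\Q,n)$'s, whose minimal Sullivan models carry the zero differential.

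Next I will use the standard identification, in Sullivan theory, of the ``linear in $V_B$, constant in $V_F$'' component of $D|_{V_F}$ with the dual of the rationalized connecting homomorphism $\partial \otimes \Q \colon \pi_k(B)_\Q \to \pi_{k-1}(F)_\Q$ (after the usual degree shift).  Our hypothesis that this map vanishes rationally therefore forces $D(V_F) \subseteq \Lambda^{\geq 2}(V_B \oplus V_F)$, which upgrades $(\Lambda V_B \otimes \Lambda V_F, D)$ from a minimal \emph{relative} Sullivan algebra over $(\Lambda V_B, 0)$ to a minimal Sullivan algebra in the \emph{absolute} sense.  Now I invoke the final hypothesis, that $E$ itself is a rational $H$-space of finite type; this forces the minimal Sullivan model of $E_\Q$ to carry the zero differential.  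Since $(\Lambda V_B \otimes \Lambda V_F, D)$ is now a minimal Sullivan model of $E_\Q$, uniqueness of minimal Sullivan models up to cdga isomorphism forces $D = 0$.  At this point the relative model of $\xi_\Q$ coincides on the nose with the relative model $(\Lambda V_B, 0) \hookrightarrow (\Lambda V_B \otimes \Lambda V_F, 0)$ of the projection $F_\Q \times B_\Q \to B_\Q$, and the equivalence between rational fibrations of nilpotent spaces over a simply connected base and minimal relative Sullivan algebras yields a fibre-homotopy equivalence $\xi_\Q \simeq \mathrm{pr}_2$.

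The main obstacle will be the middle step: identifying the dual of $\partial \otimes \Q$ precisely with the linear-in-$V_B$, constant-in-$V_F$ part of $D|_{V_F}$, and checking that killing exactly this piece is what promotes relative minimality to absolute minimality.  Both statements are standard in Sullivan theory, but they have to be chained carefully so that the three $H$-space hypotheses together with the vanishing of $\partial \otimes \Q$ combine to force $D = 0$; everything else (the formation of the relative model, uniqueness of minimal models, and the dictionary between fibrations and relative Sullivan algebras) is then routine.
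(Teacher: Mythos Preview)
Your proof is correct and follows essentially the same route as the paper's: form the relative (Koszul--Sullivan) model of the fibration, use the rational $H$-space hypotheses on $B$ and $F$ to kill the outer differentials, invoke the vanishing of the rational connecting homomorphism to promote the middle algebra to an absolutely minimal Sullivan model of $E$, and then use the $H$-space hypothesis on $E$ to force $D=0$, after which the model visibly coincides with that of the product projection. The only cosmetic differences are that the paper cites Halperin's \cite[Theorem~4.12]{Hal} for the ``relative minimal $\Rightarrow$ absolutely minimal'' step where you unpack the linear-part-of-$D$ argument directly, and the paper builds the splitting $\widetilde{J}$ and the isomorphism $A$ by hand rather than appealing to the general dictionary between rational fibrations and relative Sullivan algebras.
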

\begin{proof}
By  \cite[Theorem 4.6]{Hal},
$\xi$ is a    rational fibration  as in \cite[Definition 4.5]{Hal}.
In general, this means that
$\xi_\Q$ admits a Koszul-Sullivan model which is a sequence
$$
(\land V_3; d_3)  \stackrel{P}{\longrightarrow}  \land (V_2; d_2)
\stackrel{J}{\longrightarrow} (\land V_1; d_1).
$$
Here $(\land V_j; d_j)$ is a free DG algebra  over $\Q$ for $j=1,2,3$  and a Sullivan model for $F, E, B$,  respectively.   The maps  $P$
and $J$ are Sullivan models for the projection and fibre inclusion.
The models $(\land V_1;d_1)$ and $(\land V_3; d_3)$ are
minimal, meaning $d_j(V_j)$ is contained in the decomposables
of $\land V_j$ for $j= 1,3.$ Since $B$ and $F$ are assumed to be rational H-spaces, this forces $d_3 = d_1= 0.$ The model $(\land V_2; d_2)$ is not, in
general, minimal.
However, by  \cite[Theorem 4.12]{Hal}, the vanishing of the rational
linking homomorphism implies $(\land V_2; d_2)$ is a minimal model
for $E,$ as well. Since $H^*(E; \Q) \cong H^*(\land V_2; d_2)$
is free we conclude $d_2 = 0.$  Now we may directly obtain
a DG algebra map  $\widetilde{J} \: \land (V_2; 0) \rightarrow \land (V_3;0)$
with $\widetilde{J} \circ P  = 1_{\land V_3}$. The maps $\widetilde{J}, J$ induce an
isomorphism $A \: \land
(V_2; 0)
\rightarrow (\land V_1; 0) \otimes (\land V_3; 0)$  of DG algebras
satisfying $  \pi_2 \circ A = P$ where $\pi_2 \: (\land V_1; 0) \otimes (\land V_3; 0)
\rightarrow \land (V_3; 0)$ is the projection.
Using the correspondence   between (homotopy classes of) maps between minimal DGAs and maps
   between  rational spaces,  we obtain $\alpha \: F_\Q \times B_\Q
   \longrightarrow E_\Q,$ the needed fibre homotopy equivalence.
\end{proof}

\section{The Based Case   }
 \label{sec:final}

We deduce versions of our main results in the basepoint preserving cases.  First, let $\xi \colon E \to X$ be a fibration over a based space $X$ and let $F$ be the fibre over a fixed basepoint.  We denote
by $\aut^F(\xi)$ the submonoid of $\aut(\xi)$ consisting of equivalences inducing the identity on $F.$  This is the natural based version of $\aut(\xi).$ 
Note that $\aut^F(\xi)$ is the fibre over the identity map $1_F$ of the restriction map $\mathrm{res} \colon \aut(\xi) \to \aut(F).$ 
We have the following  based version of the identity (\ref{eq:Gottlieb}) in our setting. Write $\map_*(X, Y)$ for the space of basepoint perserving maps from $X$ to $Y$. 

\begin{Thm}    Let $X$ be a compact metric space, $F$ a finite CW complex and
 $h \colon B \to B\aut(F)$ a map.  Let $\xi \colon E \to X$ be the corresponding
 $F$-fibration.  Then there is an H-equivalence
 $$\aut^F(\xi) \simeq \Omega \map_*(X, B\aut(F); h).$$
 \end{Thm}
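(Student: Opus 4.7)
The plan is to compare two naturally occurring fibrations and invoke Theorem \ref{introthm:aut}. On one side, since $\xi$ is a Hurewicz fibration and the basepoint of $X$ is nondegenerate, the restriction map $\mathrm{res} \colon \aut(\xi) \to \aut(F)$ is a fibration with fibre $\aut^F(\xi)$ by definition. On the other side, evaluation at the basepoint of $X$ gives a fibration $\map_*(X, B\aut(F)) \to \map(X, B\aut(F)) \to B\aut(F)$; restricting to the component of $h$ and looping at $h$ produces
$$\Omega\map_*(X, B\aut(F); h) \to \Omega\map(X, B\aut(F); h) \xrightarrow{\Omega\mathrm{ev}} \Omega B\aut(F).$$

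The crux is to show that the H-equivalence $\delta \colon \Omega\map(X, B\aut(F); h) \to \aut(\xi)$ constructed in the proof of Theorem \ref{introthm:aut} fits into a homotopy commutative square
$$\begin{CD}
\Omega\map(X, B\aut(F); h) @>{\delta}>> \aut(\xi) \\
@V{\Omega\mathrm{ev}}VV @V{\mathrm{res}}VV \\
\Omega B\aut(F) @>{\gamma}>> \aut(F),
\end{CD}$$
where $\gamma$ is the canonical H-equivalence. Given this compatibility, comparison of the long exact homotopy sequences of the two fibrations via the five-lemma yields a weak H-equivalence $\Omega\map_*(X, B\aut(F); h) \to \aut^F(\xi)$ on fibres. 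Both spaces are of CW homotopy type --- the former by \cite{Mil}, and the latter as the fibre of a map between CW H-spaces, using Theorem \ref{introthm:aut} and the fact that $F$ is finite CW. Hence this is a genuine H-equivalence, with multiplicativity inherited from that of $\delta$ and $\gamma$.

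The main obstacle is verifying homotopy commutativity of the square. I would unwind the construction of $\delta$ via the Barratt-Puppe sequence of $\Phi \colon \Gamma(\xi \, \Box_1 \, \xi_\infty) \to \map(X, B\aut(F))$: a loop $\omega$ based at $h$ corresponds to an adjoint map $\tilde\omega \colon S^1 \times X \to B\aut(F)$ that restricts to $h$ on each $\{t\} \times X$, and the monodromy around $S^1$ of the pulled-back bundle $\tilde\omega^{*}\xi_\infty \to S^1 \times X$ recovers $\delta(\omega) \in \aut(\xi)$. Restricting to the basepoint slice $S^1 \times \{*\}$, the induced bundle over $S^1$ is classified by $\mathrm{ev}_* \circ \omega \in \Omega B\aut(F)$, whose monodromy on the fibre $F$ is precisely $\gamma(\Omega\mathrm{ev}(\omega)) \in \aut(F)$. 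Naturality of the BHMP construction in $X$ applied to the inclusion $\{*\} \hookrightarrow X$ makes this precise and gives the required identity $\mathrm{res}\circ \delta \simeq \gamma \circ \Omega\mathrm{ev}$.
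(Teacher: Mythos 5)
Your proposal follows essentially the same route as the paper: compare the long exact homotopy sequence of the restriction fibration $\mathrm{res}\colon \aut(\xi)\to\aut(F)$ with fibre $\aut^F(\xi)$ to that of the looped evaluation fibration with fibre $\Omega\map_*(X,B\aut(F);h)$, and conclude via Theorem \ref{introthm:aut}, the $5$-lemma, and CW type of the spaces involved. You additionally spell out the homotopy-commutative comparison square relating $\delta$, $\mathrm{res}$ and $\Omega\,\mathrm{ev}$, which the paper leaves implicit; this is a correct and welcome elaboration of the same argument.
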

 \begin{proof}  
First, note that, by  \cite{Schon}, $\aut^F(\xi)$ is of CW type since $\aut(\xi)$ and $\aut(F)$. Now compare the long exact homotopy sequence of the restriction fibration above to that of loops on the evaluation fibration $\omega \colon \map(X, B\aut(F); h) \to B\aut(F)$ with fibre
$\map_*(X, B\aut(F); h).$ The result follows from Theorem \ref{introthm:aut} and the   $5$-lemma. 
 \end{proof}

As before, this gives a corresponding  localization result for $\aut^F(\xi)_\circ$:
\begin{Thm}
Let $X$ be a compact metric space, $F$ a finite CW complex and
 $h \colon B \to B\aut(F)$ a map.  Let $\xi \colon E \to X$ be the corresponding
 $F$-fibration. Then $\aut^F(\xi)_\circ$ is a nilpotent space  
and  we have an H-equivalence $$ (\aut^F(\xi)_\circ)_\PPP \simeq\Omega_\circ \map_*(X, (B\aut(F)_\circ)_\PPP; (\tilde{h})_\PPP ). $$ 
\qed
\end{Thm}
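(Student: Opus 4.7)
The plan is to mirror the proof of Theorem \ref{intro1}, taking the preceding theorem as input and using a based analogue of Theorem \ref{thm:maplocalize} in place of the free version.

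First, to see that $\aut^F(\xi)_\circ$ is nilpotent, note that the proof of the preceding theorem already shows that $\aut^F(\xi)$ is of CW homotopy type via Sch\"{o}n \cite{Schon} (being the fibre over $1_F$ of a restriction map between CW grouplike spaces). The identity component of any grouplike space of CW type is then automatically nilpotent.

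Second, I would establish the following based analogue of Theorem \ref{thm:maplocalize}: for $X$ a compact metric space with nondegenerate basepoint, $Y$ a nilpotent space and $f \colon X \to Y$ a based map,
$$\bigl(\Omega_\circ \map_*(X, Y; f)\bigr)_\PPP \simeq \Omega_\circ \map_*(X, Y_\PPP; f_\PPP).$$
The essential input is a based version of Theorem \ref{thm:mapdirect}. One observes that the Eilenberg-Steenrod inverse system $X \cong \Invlim_j X_j$ from \cite{ES} can be chosen in the pointed category: each $X_j$ is a finite pointed complex, the bonding maps $g_{ij}$ and structure maps $g_j$ are basepoint-preserving, and the given based $f$ factors up to based homotopy through some pointed $f_m \colon X_m \to Y$. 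The direct limit argument of \cite[Theorem 6.4]{KSS} then carries over verbatim to give $\Dirlim_j \pi_n(\map_*(X_j, Y; f_j)) \cong \pi_n(\map_*(X, Y; f))$. The commutative-square argument in the proof of Theorem \ref{thm:maplocalize} then produces the based localization, invoking the pointed version of \cite[Theorem II.3.11]{HMR}.

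With these pieces in hand the theorem follows along the lines of the proof of Theorem \ref{intro1}: the preceding theorem supplies $\aut^F(\xi) \simeq \Omega\map_*(X, B\aut(F); h)$; lifting to the simply connected cover gives an H-equivalence $\Omega_\circ \map_*(X, B\aut(F); h) \simeq \Omega_\circ \map_*(X, B\aut(F)_\circ; \tilde{h})$ of CW grouplike spaces (weak H-equivalence upgraded via \cite{Mil}); and the based analogue of Theorem \ref{thm:maplocalize} applied to the right side delivers the claimed H-equivalence. The principal obstacle is the based Eilenberg-Steenrod approximation --- one must verify that the finite-complex approximations $X_j$ together with all bonding and structure maps can be chosen compatibly with a fixed basepoint of $X$, and that the factorization of a given based map can itself be taken to be based. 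This is a matter of consistent basepoint selection in the construction of \cite{ES} and is otherwise routine; the rest of the argument is a direct translation of the unbased case.
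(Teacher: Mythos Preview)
Your proposal is correct and follows precisely the approach the paper intends: the paper gives no explicit proof (just \qed\ following the remark ``As before''), meaning the argument is the based analogue of the proof of Theorem~\ref{intro1}, invoking the preceding theorem together with the based version of Theorem~\ref{thm:maplocalize} (the paper having noted at the start of Section~\ref{sec:localize} that ``these results all hold for spaces of basepoint-preserving functions, as well''). You have simply made explicit what the paper leaves implicit, including the pointed Eilenberg--Steenrod approximation needed for the based analogue of Theorem~\ref{thm:mapdirect}.
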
 

Next, given a fibration $\zeta \colon E \to X$ write  $\Gamma_*(\zeta)$ for the space of basepoint preserving sections of $\zeta.$ We have:

\begin{Thm}   Let $\zeta \colon E \to X$ be a fibrewise grouplike space with connected, CW grouplike fibre $G$  and base $X$  a connected compact metric space. Suppose $\zeta$    is the pullback of a CW fibration.  Then $\Gamma_*(\zeta)_\circ$ is a nilpotent space and the map $$\Gamma_*(\zeta)_\circ \to \Gamma_*(\zeta_{(\PPP)})_\circ$$ induced by a fibrewise $\PPP$-localization $\zeta \to \zeta_{(\PPP)}$
 is a $\PPP$-localization map. 

 \end{Thm}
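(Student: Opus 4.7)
The plan is to reduce to the free-section case (Theorem \ref{intro2}) via the evaluation-at-basepoint fibration, in a manner parallel to the proof of the preceding based theorem for $\aut^F(\xi)$. Since $x_0 \in X$ is a nondegenerate basepoint, the inclusion $\{x_0\} \hookrightarrow X$ is a cofibration, so restriction yields a Hurewicz fibration
\[
\et \colon \Gamma(\zeta) \longrightarrow G
\]
whose fiber over the identity element $e(x_0) \in G$ is $\Gamma_*(\zeta)$. Restricting to identity components, this gives a fibration sequence $\Gamma_*(\zeta)_\circ \to \Gamma(\zeta)_\circ \to G$, and the analogous construction applied to a fibrewise $\PPP$-localization yields $\Gamma_*(\zeta_{(\PPP)})_\circ \to \Gamma(\zeta_{(\PPP)})_\circ \to G_\PPP$. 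The fibre map $\zeta \to \zeta_{(\PPP)}$ induces a morphism between these two fibration sequences.

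Next I would verify the CW and nilpotence conditions. By Theorem \ref{intro2}, $\Gamma(\zeta)_\circ$ is a CW-type nilpotent space; together with $G$ of CW type, the fiber $\Gamma_*(\zeta)_\circ$ of the Hurewicz fibration above is again of CW type (via Sch{\"o}n-style results on fibers of fibrations between CW spaces, or, more directly, since $\Gamma_*(\zeta)_\circ$ is a grouplike space whose classifying space is recovered from the sequence). Being grouplike and of CW type it is nilpotent. The same reasoning applies to $\Gamma_*(\zeta_{(\PPP)})_\circ$, and moreover its homotopy groups are $\PPP$-local, as they sit in a long exact sequence between $\PPP$-local abelian groups.

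Finally I would compare the long exact sequences of homotopy groups of the two fibration sequences. Two of the three vertical maps are $\PPP$-localizations: the map on total spaces by Theorem \ref{intro2}, and the map $G \to G_\PPP$ by the very definition of fibrewise $\PPP$-localization. Since every group involved is abelian (higher homotopy or $\pi_1$ of a grouplike space), and since $\PPP$-localization of abelian groups is an exact functor, the five-lemma forces the induced map
\[
\pi_n(\Gamma_*(\zeta)_\circ) \longrightarrow \pi_n(\Gamma_*(\zeta_{(\PPP)})_\circ)
\]
to be $\PPP$-localization for every $n \geq 1$. Combined with the nilpotence of source and target and the $\PPP$-locality of the target, this shows that $\Gamma_*(\zeta)_\circ \to \Gamma_*(\zeta_{(\PPP)})_\circ$ is a $\PPP$-localization map. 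The main technical point to verify is that the evaluation map $\et$ is genuinely a Hurewicz fibration in the present non-CW setting; this is the standard consequence of combining the fibration property of $\zeta$ with the cofibration $\{x_0\} \hookrightarrow X$, but one must confirm the argument uses nothing beyond the nondegenerate basepoint hypothesis.
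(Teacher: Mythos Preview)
Your approach is correct but differs from the paper's. The paper explicitly rejects the evaluation-fibration route, writing that ``the relevant evaluation fibration $\omega \colon \Gamma(\zeta)_\circ \to E$ \ldots\ is not of use since $E$ is not well-behaved under fibrewise localization,'' and instead reruns the entire inverse-limit argument (Theorem~\ref{thm:Gammalimit} together with M{\"o}ller's theorem, which is already proved in the based setting) directly for $\Gamma_*$.

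Your version avoids the objection the authors raise: evaluation of a \emph{section} at $x_0$ lands in the fibre $G = p^{-1}(x_0)$, not in the total space $E$, and $G$ \emph{is} well-behaved under fibrewise $\PPP$-localization --- it becomes $G_\PPP$ by definition. With that target, the comparison of the two evaluation fibration sequences goes through exactly as you outline: Theorem~\ref{intro2} localizes $\Gamma(\zeta)_\circ$, the fibre map localizes $G$, and the five-lemma (using exactness of $\PPP$-localization on abelian groups) handles $\pi_n(\Gamma_*(\zeta)_\circ)$ for all $n\geq 1$. The CW-type of $\Gamma_*(\zeta)_\circ$ follows from Sch{\"o}n applied to the fibration $\Gamma(\zeta)_\circ \to G$, and nilpotence from the grouplike structure; the $\PPP$-locality of the target homotopy groups follows from the long exact sequence as you say.

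What each approach buys: yours is more economical, reducing cleanly to the unbased case already established and needing no new limit argument. The paper's approach is more self-contained and parallels the unbased proof step for step; it would also adapt to contexts where one cannot bootstrap from a free version. The one point in your write-up that deserves a reference rather than an aside is that restriction $\Gamma(\zeta)\to \Gamma(\zeta|_{\{x_0\}})=G$ is a Hurewicz fibration whenever $\{x_0\}\hookrightarrow X$ is a closed cofibration and $\zeta$ is a Hurewicz fibration; this is standard fibrewise homotopy theory (e.g.\ \cite{CJ}) and uses nothing beyond the nondegenerate-basepoint hypothesis you flagged.
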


\begin{proof} In this case, the relevant evaluation fibration $\omega \colon \Gamma(\zeta)_\circ \to E$ with fibre $\Gamma_*(\zeta)_\circ$ is    not of use since  $E$ is not well-behaved under fibrewise localization.
However, we may repeat the entire argument  in the based case to achieve the needed result. We note that M{\" o}ller's theorem \cite[Theorem 5.3]{Mol}, for $X$ a finite complex,  is proved in the based setting. The rest of the argument thus proceeds as before.
\end{proof}

Finally, we remark that our applications Theorems \ref{thm:F} and \ref{thm:P} also hold in the respective based
settings but with ordinary \v{C}ech cohomology replaced by reduced \v{C}ech cohomology in the rational homotopy calculations.

\end{document}